\documentclass{informs44}

\usepackage{amsmath,amssymb,amsfonts}
\usepackage{mathrsfs}
\usepackage[bookmarks=true,linkcolor=red,citecolor=blue,urlcolor=green,colorlinks=true, breaklinks]{hyperref}
\usepackage{enumerate}
\usepackage[textsize=footnotesize,textwidth=1.5cm]{todonotes}
\usepackage[ruled,vlined,procnumbered]{algorithm2e}
\usepackage{bm}
\usepackage{nicematrix}
\usepackage{booktabs}
\usepackage{threeparttable}
\usepackage{tikz}
\usepackage{bm}
\usepackage{pgfplots}
\usepackage{bookmark}
\usepackage{multirow}
\usepackage[normalem]{ulem}
\usepackage{longtable}
\usepackage{lscape}
\usepackage{makecell}
\usepackage{rotating}
\usepackage{fancyvrb}
\usepackage{anyfontsize} 
\usetikzlibrary{positioning}
\usepackage{comment}
\usepackage{tabularx}
\usepackage[margin=1in]{geometry}
\usepackage{graphicx}
\usepackage{array}
\usepackage{tcolorbox}
\usepackage{subcaption}
\usepackage{adjustbox}
\usepackage{pdflscape}
\usepackage{bm}

\usepackage{natbib}
\bibpunct[, ]{(}{)}{,}{a}{}{,}%
\def\bibfont{\small}%
\newtheorem{thm}{Theorem}

\newcommand{\BPname}{\texttt{BPCOL+}}
\newcommand{\BPUB}{\texttt{BPCOL+UB}}

\OneAndAHalfSpacedXI
\def\bibfont{\small}%
\pgfplotsset{compat=1.16}
\bibpunct[, ]{(}{)}{,}{a}{}{,}%
\graphicspath{{images/}}

\usepackage{etoolbox}
\newcommand{\zerodisplayskips}{%
  \setlength{\abovedisplayskip}{3pt}%
  \setlength{\belowdisplayskip}{3pt}%
  \setlength{\abovedisplayshortskip}{0pt}%
  \setlength{\belowdisplayshortskip}{0pt}}
\appto{\normalsize}{\zerodisplayskips}
\appto{\small}{\zerodisplayskips}
\appto{\footnotesize}{\zerodisplayskips}

\TheoremsNumberedThrough     
\ECRepeatTheorems
\JOURNAL{}
\EquationsNumberedThrough    

\begin{document}


\RUNAUTHOR{Zheng et al.}
\RUNTITLE{Branch-and-Price for Graph Coloring}


\TITLE{Advancing Branch-and-Price for Graph Coloring: New Pricing Strategies and Benchmark Results}

\ARTICLEAUTHORS{%
\AUTHOR{
Mingming Zheng\textsuperscript{a},
Roberto Baldacci\textsuperscript{b},
Fabio Furini\textsuperscript{c},
Qinghua Wu\textsuperscript{a}
}

\AFF{
\textsuperscript{a} School of Management, Huazhong University of Science and Technology, Wuhan 430074, China, 
\EMAIL{m.zheng0626@gmail.com}, \EMAIL{qinghuawu1005@gmail.com}\\
\textsuperscript{b} College of Science and Engineering, Hamad Bin Khalifa University, Doha 34110, Qatar, 
\EMAIL{rbaldacci@hbku.edu.qa}\\
\textsuperscript{c} Department of Computer, Control, and Management Engineering ``Antonio Ruberti'', Sapienza University of Rome, Rome 00185, Italy, 
\EMAIL{fabio.furini@uniroma1.it}
}
} 

\ABSTRACT{
This paper proposes \BPname, an exact branch-and-price algorithm for the Graph Coloring Problem. The algorithm is both novel and highly effective, integrating enhanced pricing strategies within Zero-Suppressed Binary Decision Diagrams (ZDDs) to efficiently solve the pricing problem associated with the maximal-stable-set-based set-covering formulation.
After computing upper and lower bounds at the root node using heuristic and column generation techniques, \BPname\ reduces the size of the ZDD through maximal stable set reduction techniques that exploit alternative dual vectors.
Computational experiments on the 137 DIMACS benchmark instances and on 5,000 recently proposed Erd\H{o}s--R\'enyi instances show that \BPname\ outperforms existing exact branch-and-price algorithms and remains highly competitive with state-of-the-art SAT-based exact solvers. In particular, \BPname\ solves 96 DIMACS instances within one hour and proves optimality for 4,641 of the 5,000 Erd\H{o}s--R\'enyi instances.
}%


\KEYWORDS{Graph Coloring Problem • Exact Algorithm • Branch-and-Price • Decision Diagrams}
\HISTORY{\today}

\maketitle

\section{Introduction}\label{sec:intro}

Given an undirected graph $\mathcal{G}:=(\mathcal{V},\mathcal{E})$, where $\mathcal{V}$ is the vertex set and $\mathcal{E}$ is the edge set, a \emph{proper vertex coloring} assigns a color to each vertex so that adjacent vertices receive different colors.
A \emph{stable set} is a subset $\mathcal{S}\subseteq\mathcal{V}$ whose vertices are pairwise nonadjacent; hence, $\mathcal{S}$ can be assigned a single color in a proper coloring. A \emph{maximal stable set} is a stable set that is inclusion-wise maximal. 
The \emph{Graph Coloring Problem} (GCP), also known as the \emph{Vertex Coloring Problem}, seeks a partition of $\mathcal{V}$ into the minimum number of stable sets. This value is called the \emph{chromatic number} of $\mathcal{G}$ and is denoted by $\chi(\mathcal{G})$.

As an illustrative example of the GCP, the left part of Figure~\ref{fig:example} depicts a graph $\mathcal{G}$ with $|\mathcal{V}|=6$ vertices and $|\mathcal{E}|=7$ edges.
The vertex set is $\mathcal{V}=\{v_1,v_2,v_3,v_4,v_5,v_6\}$, and the edge set consists of a $6$-cycle plus an additional edge between $v_1$ and $v_4$.
The right part of the figure shows a proper vertex coloring using two colors, which partitions $\mathcal{V}$ into the stable sets $\{v_1,v_3,v_5\}$ and $\{v_2,v_4,v_6\}$.
Since no proper vertex coloring with a single color exists, it follows that $\chi(\mathcal{G})=2$.
This example is used throughout the paper to illustrate key algorithmic components.

\begin{figure}[t]
\centering
\begin{subfigure}{.4\linewidth}
\centering
\begin{tikzpicture}[
  style=thick,
  scale=1,
  every node/.style={draw, circle, inner sep=1.5pt, minimum size=20pt, font=\normalsize, fill=white}
  ]
  \node (v1) at (180:2cm) {$v_1$};
  \node (v2) at (120:2cm) {$v_2$};
  \node (v3) at (60:2cm) {$v_3$};
  \node (v4) at (0:2cm) {$v_4$};
  \node (v5) at (-60:2cm) {$v_5$};
  \node (v6) at (-120:2cm) {$v_6$};
  
  \draw (v1)--(v2)--(v3)--(v4)--(v5)--(v6)--(v1);
  \draw (v1)--(v4);
\end{tikzpicture}
\end{subfigure}
\begin{subfigure}{.4\linewidth}
\centering
\begin{tikzpicture}[
  style=thick,
  scale=1,
  every node/.style={draw, circle, inner sep=1.5pt, minimum size=20pt, font=\normalsize}
  ]
  \node[fill=white!60!red] (v1) at (180:2cm) {$v_1$};
  \node[fill=white!60!blue] (v2) at (120:2cm) {$v_2$};
  \node[fill=white!60!red] (v3) at (60:2cm) {$v_3$};
  \node[fill=white!60!blue] (v4) at (0:2cm) {$v_4$};
  \node[fill=white!60!red] (v5) at (-60:2cm) {$v_5$};
  \node[fill=white!60!blue] (v6) at (-120:2cm) {$v_6$};
  
  \draw (v1)--(v2)--(v3)--(v4)--(v5)--(v6)--(v1);
  \draw (v1)--(v4);
\end{tikzpicture}
\end{subfigure}
\vspace{0.3cm}
\caption{An example graph and a proper vertex coloring using two colors.}
\label{fig:example}
\end{figure}

The GCP is a classical and computationally challenging $\mathcal{NP}$-hard combinatorial optimization problem in graph theory~\citep{Garey1979}.
Beyond its theoretical relevance, it has numerous applications, including scheduling, register allocation in compilers, and frequency assignment in wireless communication systems \citep[see, e.g.,][]{pardalos1998graph}.
For a comprehensive survey, we refer the interested reader to~\citet{malaguti2010review}.

\subsection{Related Work on Exact Algorithms for the GCP}\label{sec:lit}

The GCP has been extensively studied over the past decades, and numerous exact and heuristic algorithms have been proposed in the literature. Given the focus of this paper on exact methods, this section provides a brief overview of exact algorithms for the GCP. To the best of our knowledge, the most effective exact approaches can be broadly classified into two main research streams: Branch-and-Price (BP) algorithms and satisfiability (SAT)-based algorithms.

Significant progress has been made in developing exact BP algorithms for the GCP since the seminal work of \citet{Mehrotra1996}. \citet{malaguti2011exact} proposed an approach combining an effective heuristic for computing tight upper bounds with advanced pricing methods and two alternative branching schemes. \citet{Held2012} improved the Column Generation (CG) bounding phase by introducing a method to compute safe lower bounds. \citet{Gualandi2012} developed a hybrid method that integrates constraint programming into the column-generation framework to solve the Pricing Problem (PP) more efficiently and proposed an advanced approach to explore the branching tree. \citet{morrison2014characteristics} designed advanced branching rules that preserve the PP structure throughout the branching tree.  More recently, \textit{Decision Diagrams} (DDs) have emerged as a powerful enhancement to BP algorithms for the GCP. \citet{Morrison2016} incorporated \emph{Zero-Suppressed Binary Decision Diagrams} (ZDDs) into a BP framework to efficiently represent and manage the set of maximal stable sets in the PP. 

Machine learning has also been explored to improve CG and BP for the GCP by providing learned guidance on column selection and pricing. In earlier work, \citet{shen2022enhancing} trained an offline linear support vector machine to predict structural properties of optimal solutions to the PP and used these predictions to guide sampling that generates many high-quality stable sets with negative reduced cost in one shot, accelerating the solution of the linear programming (LP) relaxation within CG. Later, \citet{sun2022learning} shifted the learning target from negative reduced cost to the likelihood that a column appears in an optimal integer coloring, and used the predictor to iteratively sample and filter maximal stable sets, producing a diverse, high-quality column pool for constructing primal solutions and for seeding CG. These works report mainly process-level BP/column-generation metrics for instances from the literature.

SAT-based approaches have also emerged as a powerful alternative for solving the GCP, supported by the rapid advances in modern SAT solvers. These algorithms can flexibly incorporate customized branching strategies, often achieving strong competitiveness compared to BP methods. \citet{hebrard2020constraint} proposed a hybrid Constraint Programming and SAT algorithm based on Zykov's branching scheme, introduced Mycielskian-based lower bounds, and specialized pruning techniques to enhance efficiency. \citet{heule2022cliques} developed an exact SAT-based algorithm that alternates between identifying cliques of larger size and colorings with fewer colors to improve the performance of the algorithm. \citet{faber2024sat} introduced a partial-ordering SAT encoding for graph coloring, also extended to handle bandwidth constraints. More recently, \citet{brand2026customized} proposed a Zykov tree-inspired SAT algorithm that combines transitivity constraints and incremental search to improve propagation and solver performance. This recent algorithm is the current state-of-the-art SAT-based algorithm for the GCP.

Recently, \citet{van2022graph} introduced a DD-based network-flow formulation, yielding another exact method for the GCP, although it is not yet competitive with BP and SAT-based algorithms. In addition, several \texttt{DSATUR}-based branch-and-bound algorithms have been proposed; see, e.g., \citet{sewell1996improved}, \citet{san2012new}, and \citet{furini2017improved}.

Despite the significant progress in the literature, focusing on BP remains crucial because it directly addresses two persistent bottlenecks. First, many instances exhibit a very small optimality gap, yet existing BP methods often struggle to efficiently certify optimality; incorporating dual-information-based reduction techniques can substantially speed up the solution of the PP, thereby accelerating the solution of these ``near-optimal but hard-to-prove'' cases. Second, ZDD-based BP is often limited on large-scale or sparse instances due to the difficulty of fully enumerating stable sets, even though a portion of such instances can be proven optimal already at the root node; this calls for a more comprehensive BP framework that fully exploits the strengths of ZDDs when they are effective, while relying on a competitive pricing mechanism to certify optimality robustly when ZDD construction is not viable, thus covering a broader range of instances.

\subsection{Contributions and Outline of the Paper}\label{sec:contri}

This paper presents \BPname, a ZDD-based BP algorithm for the GCP. \BPname\ carefully integrates state-of-the-art techniques, including numerically safe lower-bound computation, problem reductions, and primal-bound computation methods, with the following key contributions:
\begin{itemize}
    \item \BPname\ introduces novel maximal-stable-set-reduction techniques into the ZDD construction. 
    Leveraging alternative dual vectors, \BPname\ builds a compact reduced ZDD that excludes maximal stable sets certified as irrelevant for the target value, thereby improving pricing efficiency.

    \item \BPname\ is evaluated through a comprehensive computational study on the 137 DIMACS benchmark instances and on recently proposed 5,000 Erd\H{o}s--R\'enyi graph instances. All algorithms with publicly available source code are tested under the same computational environment to ensure a fair comparison. The results show that \BPname\ significantly outperforms existing exact BP algorithms in terms of computational efficiency while remaining highly competitive with state-of-the-art SAT-based exact solvers. \BPname\ can solve 96 DIMACS instances within one hour, as well as several previously open Erd\H{o}s--R\'enyi instances.
    
    \item To foster transparency, reproducibility, and future comparisons, we provide a public repository containing all benchmark instances used in our experiments, the corresponding computational results, the complete source code, and detailed implementation information.
\end{itemize}

The remainder of this paper is organized as follows. Section \ref{sec:SC} presents the set-covering formulation of the GCP and describes the computation of the lower bound, as well as the branching strategy embedded in \BPname. Section \ref{sec:pricalg-prics} details the design of the pricing algorithm based on ZDDs and the corresponding enhanced pricing strategy. An overview of \BPname\ is provided in Section \ref{sec:algoverview}, followed by a computational study in Section \ref{sec:compstud} evaluating the performance of \BPname\ and the effectiveness of its key components. Finally, we conclude the paper in Section \ref{sec:conclusions}. 
Additional material, including the proofs of formal statements, ZDD construction details, computational results, and experiments on the main algorithmic components are provided in the e-companion.

\section{Formulation, Lower Bound Computation and Branching Scheme}\label{sec:SC}

We introduce an \textit{Integer Linear Programming} (ILP) formulation of the GCP with an exponential number of variables, originally proposed by \citet{Mehrotra1996} and known as the \textit{set-covering} formulation.

Let $\mathscr{S}$ denote the family of all maximal stable sets of $\mathcal{G}$, and for each vertex $v\in\mathcal{V}$ let $\mathscr{S}(v)\subseteq\mathscr{S}$ be the subfamily of maximal stable sets containing $v$.
For each $\mathcal{S}\in\mathscr{S}$, we introduce a binary variable $\xi_{\mathcal{S}}$ equal to~1 if and only if $\mathcal{S}$ is selected in the solution; in this case, the vertices in $\mathcal{S}$ take the same color.
The set-covering formulation reads as follows:
\begin{equation}
\label{SC}
\min_{{\boldsymbol \xi} \in \{0,1\}^{\mathscr{S}}} \left\{ \sum_{\mathcal{S} \in \mathscr{S}} \xi_{\mathcal{S}}:~~  
\sum_{\mathcal{S} \in \mathscr{S}(v)} \xi_{\mathcal{S}} \ge 1,~ v \in \mathcal{V} \right\}.
\end{equation}

The objective function in~\eqref{SC} minimizes the number of selected maximal stable sets.
The set-covering constraints in~\eqref{SC} ensure that every vertex is covered at least once.
Any optimal integer solution of~\eqref{SC} corresponds to a minimum-cardinality collection of maximal stable sets covering $\mathcal{V}$, and its value equals the chromatic number $\chi(\mathcal{G})$.
In an optimal solution of~\eqref{SC}, vertices may be covered multiple times; a proper vertex coloring can always be obtained by assigning each vertex to exactly one selected stable set.

Replacing the binary variables in~\eqref{SC} with non-negative variables yields the \textit{Linear Programming} (LP) relaxation, commonly referred to as the \textit{Fractional Graph Coloring Problem}.
Let $\overline{\chi}(\mathcal{G})$ denote the optimal value of this LP relaxation, called the \textit{fractional chromatic number}, which typically provides a tight lower bound on $\chi(\mathcal{G})$.
Since formulation~\eqref{SC} features an exponential number of variables, \BPname\ solves it via a specialized BP algorithm. For general background on column generation and the design of BP solution approaches, we refer the reader to the books of \citet{uchoa2024optimizing} and \citet{desrosiers2024branch}.

The remainder of this section discusses the computation of the lower bound and the adopted branching scheme (\S\ref{subsec:ColumnGenerationBranchingScheme}), followed by the formulation of the PP associated with the set-covering formulation (\S\ref{subsec:pricing}).

\subsection{Column Generation and Branching Scheme} \label{subsec:ColumnGenerationBranchingScheme}

At each node of the enumeration tree, \BPname\ tackles the LP relaxation of \eqref{SC}, the \textit{master problem} (MP), via CG, generating columns by repeatedly solving the PP. At each CG iteration, the \textit{restricted master problem} (RMP), i.e., the MP restricted to the columns generated so far, is solved with the simplex algorithm to obtain primal and dual solutions; then, the PP is solved to identify columns with negative reduced cost.
If such columns are found, they are added to the RMP and a new CG iteration is performed; otherwise, CG terminates, and the current RMP solution is optimal for the LP relaxation at the branching node.
We refer the interested reader to~\citet{BarnhartJNSV98,LubbeckeD05} for further details on CG.

\BPname\ adopts the branching scheme commonly known as \textit{branching on master variables}, which selects a fractional variable and creates two child branching nodes by fixing the selected variable to~0 or~1.
This rule has been successfully employed in BP algorithms for the GCP by~\citet{malaguti2011exact} and~\citet{Morrison2016}.
A known drawback is that it disrupts the structure of the PP~\citep{morrison2014characteristics, Morrison2016}, because fixing a variable to~0 requires the PP to prevent regenerating the same variable at descendant branching nodes.
In \BPname, this is handled efficiently via ZDD-based operations (see \S\ref{subsec:branching}). Below, we first discuss the impact of the adopted branching scheme on the RMP and then formulate the resulting PP. 

Consider the RMP at a generic node of the enumeration tree, and let $\overline{\mathscr{S}} \subseteq \mathscr{S}$ denote the set of maximal stable sets corresponding to the variables of the RMP. Let $\overline{\mathscr{S}}_0\subseteq\overline{\mathscr{S}}$ and $\overline{\mathscr{S}}_1\subseteq\overline{\mathscr{S}}$ denote the subsets of maximal stable sets whose associated variables are fixed to~0 and~1, respectively. These branching decisions are enforced directly in the RMP.
Maximal stable sets associated with variables fixed to~1 already cover a subset of vertices; accordingly, we define the sets of \textit{covered} and \textit{uncovered} vertices as
\begin{equation}
\mathcal{V}_c \;:=\; \bigcup_{\mathcal{S}\in\overline{\mathscr{S}}_1}\mathcal{S}
\quad \text{and} \quad
\mathcal{V}_u  \;:=\; \mathcal{V} \setminus \mathcal{V}_c .
\label{covered}
\end{equation}
Fixing a variable $\xi_{\mathcal{S}}$ to $1$ selects the maximal stable set $\mathcal{S}$. In the current RMP, this is implemented by setting the lower bound of the corresponding variable to $1$. During backtracking, when returning to a parent branching node, the original lower bound is restored.
Fixing a variable $\xi_{\mathcal{S}}$ to $0$ is implemented by setting its upper bound to $0$ in the RMP; the corresponding maximal stable set is then implicitly forbidden when solving the PP. This upper bound is removed during backtracking. This implementation preserves a consistent RMP structure throughout the algorithm, enabling the LP solver to warm start using previously computed simplex information (e.g., bases) across consecutive CG iterations and branching nodes.

\subsection{Pricing Problem}\label{subsec:pricing}

Let ${\boldsymbol \pi}\in\mathbb{R}^{|\mathcal{V}|}_{\ge 0}$ be a dual solution of the RMP, where $\pi_v$ is the dual variable associated with the set-covering constraint in~\eqref{SC} associated with vertex $v$. The reduced cost of the variable associated with a maximal stable set $\mathcal{S}\in\mathscr{S}$ computed with respect to the dual solution ${\boldsymbol \pi}$ is
\begin{equation}
\label{RC}
rc(\mathcal{S},{\boldsymbol \pi}) \;:=\; 1-\sum_{v\in\mathcal{S}}\pi_v,
\end{equation}
which also corresponds to the slack of the corresponding dual constraint.
At the current branching node, variables corresponding to sets in $\overline{\mathscr{S}}_0$ are forbidden by branching. Therefore, the PP amounts to finding a non-forbidden maximal stable set of minimum reduced cost, and is formulated as follows:
\begin{equation}
\label{eq:pricing}
\widetilde{\mathcal{S}} \in
\arg\min_{\mathcal{S}\in\mathscr{S}} \left\{~rc(\mathcal{S},{\boldsymbol \pi}) :~ \mathcal{S}\notin\overline{\mathscr{S}}_0 ~\right\}.
\end{equation}
Let $rc^*({\boldsymbol \pi}) := rc(\widetilde{\mathcal{S}}, {\boldsymbol \pi})$ denote the minimum reduced cost.
If $rc^*({\boldsymbol \pi})<0$, the corresponding variable $\xi_{\widetilde{\mathcal{S}}}$ is added to the RMP; otherwise, no improving variable exists, and CG can terminate. 
At the root node, where $\overline{\mathscr{S}}_0=\emptyset$, the PP reduces to a \textit{maximum weight stable set problem} (MWSSP) with vertex weights given by the dual variables. \BPname\ exploits exact and heuristic MWSSP techniques (see \S\ref{subsec:lb_zdd}). At branching nodes, it uses ZDD-based operations to efficiently solve the PP, as described in the next section.

\section{An Enhanced ZDD-Based Pricing Algorithm}\label{sec:pricalg-prics}

This section describes the pricing algorithm used to solve the PP \eqref{eq:pricing}. Section \ref{sec:prica-ZDD} briefly illustrates the construction of a ZDD and its use in solving the PP. Section \ref{sec:prica-ZDD-enh} then presents techniques to reduce the size of a ZDD, thereby accelerating the column generation solution process. 

\subsection{Zero-Suppressed Decision Diagrams for Solving the Pricing Problem}\label{sec:prica-ZDD}

The ZDD is an extension of the Binary Decision Diagram (BDD). The use of ZDDs to solve the PP offers significant advantages within the BP framework for the GCP \citep{Morrison2016}. In particular, ZDDs provide a compact representation of the set of maximal stable sets, thereby capturing the entire solution space of the PP in a graph-based structure.
Given the graph $\mathcal{G}$, we fix an ordering of the vertices (see \S\ref{subsec:preprocessing}), denoted by $\mathcal{V}:=\{v_1,v_2,\ldots,v_n\}$. The chosen ordering can significantly affect the size of the resulting ZDD. A ZDD can then be constructed to represent the solution space of the PP on $\mathcal{G}$. This ZDD is a layered directed acyclic graph $\mathcal{Z}:=(\mathcal{N},\mathcal{A})$ with node set $\mathcal{N}$ and arc set $\mathcal{A}$. The size of a ZDD is measured as \(|\mathcal N|+|\mathcal A|\), where \(\mathcal N\) includes all decision nodes and the two terminal nodes, and \(\mathcal A\) is the set of arcs. Graph $\mathcal{Z}$ comprises $n+1$ layers, where layer $j$ consists of nodes associated with vertex $v_j \in \mathcal{V}$, for $j=1,\ldots,n$, and consists of the following components (see Figure \ref{fig:comparison_zdd} for an example with $|\mathcal{V}|=6$ and $|\mathcal{E}|=7$):
\begin{enumerate}
    \item \textbf{Root and terminal nodes.}  
   $\mathcal{Z}$ contains a unique \emph{root} node, appearing first in the topological order of the directed acyclic graph (node~$r$ in Figure \ref{fig:comparison_zdd}(b)).  
    It also includes two terminal nodes: the \textsc{True} and \textsc{False} nodes, represented by $1$ and $0$, respectively (square nodes in Figure \ref{fig:comparison_zdd}(b)).

    \item \textbf{Non-terminal nodes and outgoing arcs.}  
    Each non-terminal node $a \in \mathcal{N} \setminus \{0,1\}$ is associated with a vertex in $\mathcal{V}$.  
    Let $var(a)$ denote the index of the vertex associated with node $a$, i.e., $var(a)=i$ if $a$ corresponds to vertex $v_i \in \mathcal{V}$.  
    For example, in Figure \ref{fig:comparison_zdd}(b), node~$d$ corresponds to vertex $v_4$, thus $var(d)=4$.  
    Node $d$ has two outgoing arcs: the \emph{high} arc and the \emph{low} arc.  
    The high arc leads to its \emph{high child node} $hi(d)$, while the low arc leads to its \emph{low child node} $lo(d)$.  
    In Figure \ref{fig:comparison_zdd}(b), the solid arc from node~$d$ to node~$c$ represents the high arc ($hi(d)=c$), and the dashed arc from node~$d$ to node~$a$ represents the low arc ($lo(d)=a$).  
    The indices of the vertices associated with the children are strictly larger than that of the parent node; that is, if $var(d)=i$, then $var(hi(d))>i$ and $var(lo(d))>i$. For example, $var(hi(d))=var(c)=6>4=var(d)$ and $var(lo(d))=var(a)=5>4=var(d)$.

    \item \textbf{Zero-suppression property.}  
For any non-terminal node \(a \in \mathcal{N} \setminus \{0,1\}\), its high child cannot be the \textsc{False} node; that is, \(hi(a)\neq 0\).  
This zero-suppression property is the key distinction between ZDDs and conventional BDDs.
\end{enumerate}

\begin{figure}[t]
  \centering
  \begin{subfigure}[c]{0.26\textwidth}
    \centering
    \begin{tikzpicture}[
			scale=0.7,
			baseline=(current bounding box.center),
			every node/.style={draw, circle, inner sep=1.5pt, minimum size=12pt, font=\small}
			]
			\node (v2) at ( 120:2.2cm) {$v_2$};
			\node (v3) at ( 60:2.2cm) {$v_3$};
			\node (v4) at (  0:2.2cm) {$v_4$};
			\node (v5) at (-60:2.2cm) {$v_5$};
			\node (v6) at (-120:2.2cm) {$v_6$};
			\node (v1) at (180:2.2cm) {$v_1$};
			\draw (v2)--(v3)--(v4)--(v5)--(v6)--(v1)--(v2);
			\draw (v1)--(v4);
		\end{tikzpicture}
    \caption{}
  \end{subfigure}\hfill%
  \begin{subfigure}[c]{0.36\textwidth}
    \centering
    \begin{tikzpicture}[
		scale=0.6,
		>=stealth,
		circle node/.style={circle, draw, inner sep=1.5pt, minimum size=10pt, font=\small},
		square node/.style={rectangle, draw, inner sep=1.5pt, minimum size=10pt, font=\small},
		one arc/.style={->},
		zero arc/.style={->, dashed, dash pattern=on 1.8pt off 1.6pt},
		baseline=(current bounding box.center)]
		\node[circle node] (v1) at (4.0,0) {$v_1$};
		\node[circle node] (v2) at (5.28,-1.6) {$v_2$};
		\node[circle node] (v3_left) at (1.45,-3.2) {$v_3$};
		\node[circle node] (v3_right) at (6.55,-3.2) {$v_3$};
		\node[circle node] (v4_left) at (4.0,-4.8) {$v_4$};
		\node[circle node] (v5) at (1.45,-6.4) {$v_5$};
		\node[circle node] (v6) at (5.28,-8.0) {$v_6$};
		\node[square node] (one)  at (3.15,-9.6) {\bfseries 1};
		\node[square node] (zero) at (5.28,-9.6) {\bfseries 0};
		
		\draw[one arc] (v1) -- (v3_left);
		\draw[zero arc] (v1) -- (v2);
		\draw[one arc] (v2) -- (v4_left);
		\draw[zero arc] (v2) -- (v3_right);
		
		\draw[one arc] (v3_left) -- (v5);
		\draw[zero arc] (v3_left) to[bend right=20] (zero);
		\draw[one arc] (v3_right) -- (v6);
		\draw[zero arc] (v3_right) to[bend left=20] (zero);
		\draw[one arc] (v4_left) -- (v6);
		\draw[zero arc] (v4_left) -- (v5);
		\draw[one arc] (v5) to[bend right=10] (one);
		\draw[zero arc] (v5) to[bend right=15] (zero);
		\draw[one arc] (v6) -- (one);
		\draw[zero arc] (v6) -- (zero);
		
		\node[left, xshift=-5pt, font=\tiny] at (v1) {$r$};
		\node[left, xshift=-5pt, font=\tiny] at (v2) {$f$};
		\node[left, xshift=-5pt, font=\tiny] at (v3_left) {$b$};
		\node[left, xshift=-5pt, font=\tiny] at (v3_right) {$e$};
		\node[left, xshift=-5pt, font=\tiny] at (v4_left) {$d$};
		\node[left, xshift=-5pt, font=\tiny] at (v5) {$a$};
		\node[left, xshift=-5pt, font=\tiny] at (v6) {$c$};
		
		\draw[gray, dashed, dash pattern=on 2pt off 2pt, line width=0.35pt] (0.5,-0.8) -- (8.0,-0.8);
		\node[left, font=\tiny, gray] at (0.3,-0.8) {$v_1$};
		
		\draw[gray, dashed, dash pattern=on 2pt off 2pt, line width=0.35pt] (0.5,-2.4) -- (8.0,-2.4);
		\node[left, font=\tiny, gray] at (0.3,-2.4) {$v_2$};
		
		\draw[gray, dashed, dash pattern=on 2pt off 2pt, line width=0.35pt] (0.5,-4.0) -- (8.0,-4.0);
		\node[left, font=\tiny, gray] at (0.3,-4.0) {$v_3$};
		
		\draw[gray, dashed, dash pattern=on 2pt off 2pt, line width=0.35pt] (0.5,-5.6) -- (8.0,-5.6);
		\node[left, font=\tiny, gray] at (0.3,-5.6) {$v_4$};
		
		\draw[gray, dashed, dash pattern=on 2pt off 2pt, line width=0.35pt] (0.5,-7.2) -- (8.0,-7.2);
		\node[left, font=\tiny, gray] at (0.3,-7.2) {$v_5$};
		
		\draw[gray, dashed, dash pattern=on 2pt off 2pt, line width=0.35pt] (0.5,-8.8) -- (8.0,-8.8);
		\node[left, font=\tiny, gray] at (0.3,-8.8) {$v_6$};
	\end{tikzpicture}
    \caption{}
  \end{subfigure}\hfill%
  \begin{subfigure}[c]{0.36\textwidth}
    \centering
    \begin{tikzpicture}[
		scale=0.6,
		>=stealth,
		circle node/.style={circle, draw, inner sep=1.5pt, minimum size=10pt, font=\small},
		square node/.style={rectangle, draw, inner sep=1.5pt, minimum size=10pt, font=\small},
		one arc/.style={->},
		zero arc/.style={->, dashed, dash pattern=on 1.8pt off 1.6pt},
		baseline=(current bounding box.center)]
		\node[circle node] (v1) at (4.0,0) {$v_1$};
		\node[circle node] (v2) at (5.28,-1.6) {$v_2$};
		\node[circle node] (v3_left) at (1.45,-3.2) {$v_3$};
		\node[circle node] (v4_left) at (4.0,-4.8) {$v_4$};
		\node[circle node] (v5) at (1.45,-6.4) {$v_5$};
		\node[circle node] (v6) at (5.28,-8.0) {$v_6$};
		\node[square node] (one)  at (3.15,-9.6) {\bfseries 1};
		\node[square node] (zero) at (5.28,-9.6) {\bfseries 0};
		
		\draw[one arc] (v1) -- (v3_left);
		\draw[zero arc] (v1) -- (v2);
		\draw[one arc] (v2) -- (v4_left);
		\draw[zero arc] (v2) to[bend left=25] (zero);
		
		\draw[zero arc] (v3_left) to[bend right=20] (zero);
		\draw[one arc] (v3_left) -- (v5);
		
		\draw[one arc] (v4_left) to (v6);
		\draw[zero arc] (v4_left) to[bend right=18] (zero);
		\draw[one arc] (v5) to[bend right=10] (one);
		\draw[zero arc] (v5) to[bend right=15] (zero);
		\draw[one arc] (v6) -- (one);
		\draw[zero arc] (v6) -- (zero);
		
		\node[left, xshift=-5pt, font=\tiny] at (v1) {$r$};
		\node[left, xshift=-5pt, font=\tiny] at (v2) {$e$};
		\node[left, xshift=-5pt, font=\tiny] at (v3_left) {$b$};
		\node[left, xshift=-5pt, font=\tiny] at (v4_left) {$d$};
		\node[left, xshift=-5pt, font=\tiny] at (v5) {$a$};
		\node[left, xshift=-5pt, font=\tiny] at (v6) {$c$};
		
		\draw[gray, dashed, dash pattern=on 2pt off 2pt, line width=0.35pt] (0.5,-0.8) -- (8.0,-0.8);
		\draw[gray, dashed, dash pattern=on 2pt off 2pt, line width=0.35pt] (0.5,-2.4) -- (8.0,-2.4);
		\draw[gray, dashed, dash pattern=on 2pt off 2pt, line width=0.35pt] (0.5,-4.0) -- (8.0,-4.0);
		\draw[gray, dashed, dash pattern=on 2pt off 2pt, line width=0.35pt] (0.5,-5.6) -- (8.0,-5.6);
		\draw[gray, dashed, dash pattern=on 2pt off 2pt, line width=0.35pt] (0.5,-7.2) -- (8.0,-7.2);
		\draw[gray, dashed, dash pattern=on 2pt off 2pt, line width=0.35pt] (0.5,-8.8) -- (8.0,-8.8);
	\end{tikzpicture}
    \caption{}
  \end{subfigure}\vspace{0.5cm}

  \caption{Example of construction of a ZDD and associated reduced ZDD: (a) Graph $\mathcal{G}=(\mathcal{V},\mathcal{E})$, (b) ZDD graph $\mathcal{Z}=(\mathcal{N},\mathcal{A})$ and (c) ZDD reduced graph.}
  \label{fig:comparison_zdd}
\end{figure}

Any path \(\mathcal{P}_{\mathcal S}\) from the root node to a terminal node in \(\mathcal{Z}\) uniquely corresponds to a subset of vertices \(\mathcal S \subseteq \mathcal{V}\). Starting from the root node, if the next node along the path is the high child \(hi(a)\), then \(v_{var(a)} \in \mathcal S\); otherwise, if the path follows the low child \(lo(a)\), then \(v_{var(a)} \notin \mathcal S\). The terminal node reached by \(\mathcal{P}_{\mathcal S}\) represents the output of \(\mathcal S\) on \(\mathcal{Z}\), denoted by \(\mathcal{Z}(\mathcal S)\). The output \(\mathcal{Z}(\mathcal S)\) takes only two values: \(\mathcal{Z}(\mathcal S)=1\) indicates that \(\mathcal{Z}\) \emph{accepts} \(\mathcal S\), whereas \(\mathcal{Z}(\mathcal S)=0\) means that \(\mathcal{Z}\) \emph{rejects} \(\mathcal S\).
\(\mathcal{Z}\) is reduced by applying the standard node-merging rule: any two nodes in \(\mathcal{Z}\) that correspond to the same vertex in \(\mathcal{G}\) and have identical low and high children are merged into a single node, thereby reducing the total number of nodes in \(\mathcal{Z}\).

The example in Figure~\ref{fig:comparison_zdd}(a) shows that the graph has four maximal stable sets:
\(S_1=\{v_1,v_3,v_5\}\), \(S_2=\{v_2,v_4,v_6\}\), \(S_3=\{v_2,v_5\}\), and \(S_4=\{v_3,v_6\}\).
Figure~\ref{fig:comparison_zdd}(b) shows the complete ZDD, which contains seven decision nodes (labeled \(r,f,b,e,d,a,c\)) and encodes all four maximal stable sets through distinct root-to-\textsc{True} paths.
For instance, consider the path \(\{r,f,d,c,1\}\).
Since \(r\) and \(f\) are connected by a dashed arc (the low arc), the corresponding vertex \(v_1\) is excluded from the set.
In contrast, \(f\) and \(d\) are connected by a solid arc (the high arc), so \(v_2\) is included.
Similarly, the subsequent solid arcs imply that \(v_4\) and \(v_6\) are included as well.
Hence, the path \(\{r,f,d,c,1\}\) corresponds to the maximal stable set \(S_2=\{v_2,v_4,v_6\}\). Since the path ends at the \textsc{True} node, we have \(\mathcal{Z}(S_2)=1\), meaning that \(\mathcal{Z}\) accepts \(S_2\).

\subsubsection{Solving the Pricing Problem.}

Given the family \(\mathscr{S}\) of all maximal stable sets of graph \(\mathcal{G}\), let \(\mathcal{Z}_{\mathscr{S}}\) denote the ZDD encoding \(\mathscr{S}\). For a given instance of the PP \eqref{eq:pricing} with dual vector \({\boldsymbol \pi}=(\pi_v)_{v\in\mathcal{V}}\), each non-terminal node \(a\in\mathcal{N}\setminus\{0,1\}\), labeled by vertex \(v_{\mathrm{var}(a)}\), is assigned weight \(\pi_{v_{\mathrm{var}(a)}}\) on its high arc and weight \(0\) on its low arc.
Thus, the PP reduces to finding a longest path from the root to the \textsc{True} node in the weighted graph $\mathcal{Z}_{\mathscr{S}}$, which can be efficiently solved by topological sorting and dynamic programming \citep{Morrison2016}. The resulting path corresponds to the maximal stable set $\widetilde{\mathcal{S}} \in \mathscr{S}$ (see \eqref{eq:pricing}) whose associated variable has minimum reduced cost $rc^*({\boldsymbol \pi})$. 

After each pricing iteration, once the column associated with a maximal stable set $\mathcal S$ is added to the RMP, the ZDD is updated to avoid regenerating $\mathcal S$.  We adopt the \textit{RestrictSet} procedure of \citet{Morrison2016}, which removes exactly the path corresponding to $\mathcal S$ from the diagram $\mathcal{Z}_{\mathscr{S}}$ while preserving all other solutions, so that the updated ZDD directly represents the constrained PP at the next iteration. Specifically, the algorithm identifies the unique root-to-\textsc{True} path $P_{\mathcal S}$ and redirects its endpoint to the \textsc{False} node. If $P_{\mathcal S}$ shares a suffix with other solutions, \textit{RestrictSet} duplicates the overlapping portion from the first node with indegree greater than one (the split node) and redirects only the duplicate to the \textsc{False} node, thereby excluding only $\mathcal S$ while preserving all other feasible paths.  Similarly, branching decisions encoded in the set $\overline{\mathscr{S}}_0$ are handled by removing the corresponding maximal stable sets from $\mathcal{Z}_{\mathscr{S}}$. For additional details, see Algorithm~1 in \citet{Morrison2016}.

\subsection{Reduced-Cost Fixing and ZDD-Based Column Reduction}\label{sec:prica-ZDD-enh}

A classical result in integer programming, often referred to as \emph{reduced-cost fixing} and dating back to the seminal paper on the Traveling Salesman Problem by \citet{dantzig1954solution}, states that, given an upper bound $UB$ and the optimal value $LB$ of the LP relaxation of a binary integer program, any variable with reduced cost larger than the \textit{gap} $=UB-LB$ cannot take a positive value in an optimal integer solution. Reduced-cost fixing is a popular technique for accelerating the solution of \textit{compact} mixed-integer linear programs (MILPs).
For integer programming formulations with a huge number of variables, referred to as \textit{extensive} formulations and typically solved by a BP algorithm, reduced-cost fixing is not straightforward to apply directly to master-problem variables, because variables fixed to zero must also be prevented from being regenerated by the pricing subproblem. However, two main types of techniques have been used in the literature: (i) reduced-cost fixing within the pricing subproblem, see, e.g., \citet{Irnich2010} and \citet{Desaulniers2020}, and (ii) column enumeration approaches, see, e.g., \citet{Agarwal1989}, \citet{Baldacci2008}, and \citet{Baldacci2023}.
In the first case, fixing decisions are applied directly to the PP on local structures of the pricing graph (e.g., arcs or arc sequences in path-based formulations). Removing such a structure implicitly eliminates all master variables whose columns contain it. However, these tests rely on sufficient conditions that ensure all columns with the structure have reduced cost above the target gap; if a structure is not eliminated, no guarantee is provided that all associated columns lie within the gap. In the second case, all columns corresponding to maximal stable sets with reduced cost not exceeding the gap are generated and used to build a reduced master problem, which is then solved as a MILP. A recent work by \citet{yang2024} further investigates advanced techniques to reduce the set of generated columns.

In this section, we describe a reduced-cost fixing approach along the lines of the second strategy. 
Given a target objective value \(\tau\), instead of explicitly enumerating all columns that pass the reduced-cost fixing tests, we construct a reduced ZDD that represents them implicitly. Unlike classical reduced-cost fixing in the pricing problem, where local fixing rules may leave some columns in the reduced pricing graph with reduced costs exceeding the target gap, our reduction is embedded directly into the recursive ZDD construction. The algorithm discards a branch either when the maximal stable set represented by that branch violates a fixing threshold, or earlier when all maximal stable sets extending the current partial branch can be certified to violate such a threshold. As formalized below, the final reduced ZDD contains only maximal stable sets that are not certified for elimination by the adopted fixing tests.
Additionally, we exploit alternative or multiple dual vectors to further strengthen the variable reduction and potentially improve the quality of the computed dual bounds.
The following lemma underlies the reduced-cost fixing strategy using a dual-feasible vector.

\begin{lemma}
\label{lemma:sc_fix}
Let \(\tau\in\mathbb{Z}_{\ge 0}\) be a target value, and let
\({\boldsymbol \pi}\in\mathbb{R}^{|\mathcal{V}|}_{\ge 0}\) be a dual feasible solution of the MP. For a maximal stable set \(\bar{\mathcal{S}}\in\mathscr{S}\), if
\begin{equation}
\label{eq:sc_fix_condition}
rc(\bar{\mathcal{S}},{\boldsymbol \pi})
>
\tau-\sum_{v\in\mathcal{V}}\pi_v,
\end{equation}
then no feasible integer solution of~\eqref{SC} with objective value at most
\(\tau\) can satisfy \(\xi_{\bar{\mathcal{S}}}=1\).
\end{lemma}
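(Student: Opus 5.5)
The plan is a direct contrapositive argument built on the standard identity that expresses the objective of any feasible solution through the dual vector and the reduced costs. Take any feasible integer solution $\boldsymbol\xi$ of~\eqref{SC} with $\xi_{\bar{\mathcal S}}=1$. We will show that its objective value is strictly greater than $\tau-\sum_v\pi_v+\sum_v\pi_v=\tau$. This contradicts the assumption that the value is at most $\tau$.

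The first step is to rewrite the objective. Since $rc(\mathcal S,\boldsymbol\pi)=1-\sum_{v\in\mathcal S}\pi_v$ by~\eqref{RC}, we have
\begin{equation*}
\sum_{\mathcal S\in\mathscr S}\xi_{\mathcal S}
=\sum_{\mathcal S\in\mathscr S} rc(\mathcal S,\boldsymbol\pi)\,\xi_{\mathcal S}
+\sum_{v\in\mathcal V}\pi_v\sum_{\mathcal S\in\mathscr S(v)}\xi_{\mathcal S},
\end{equation*}
which follows by exchanging the order of summation.

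The second step uses the sign conditions to bound the two terms. For the second term, $\pi_v\ge 0$ together with the covering constraints $\sum_{\mathcal S\in\mathscr S(v)}\xi_{\mathcal S}\ge 1$ gives a lower bound of $\sum_v\pi_v$. For the first term, dual feasibility of $\boldsymbol\pi$ for the MP means $rc(\mathcal S,\boldsymbol\pi)\ge 0$ for every $\mathcal S\in\mathscr S$. Since $\xi\ge 0$, every summand is nonnegative, so the first term is at least $rc(\bar{\mathcal S},\boldsymbol\pi)\,\xi_{\bar{\mathcal S}}=rc(\bar{\mathcal S},\boldsymbol\pi)$.

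The third step combines these bounds with~\eqref{eq:sc_fix_condition}:
\begin{equation*}
\sum_{\mathcal S}\xi_{\mathcal S}\;\ge\; rc(\bar{\mathcal S},\boldsymbol\pi)+\sum_v\pi_v\;>\;\tau.
\end{equation*}
Hence no feasible integer solution with value at most $\tau$ can have $\xi_{\bar{\mathcal S}}=1$.

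The only delicate point is being explicit that dual feasibility means nonnegative reduced cost for \emph{all} maximal stable sets. That is, it refers to the full MP over $\mathscr S$, not only the RMP columns, and this is what lets us drop the other terms. Integrality of $\tau$ is not needed for the argument. It only matters for how the lemma is applied elsewhere, where one may take $\tau=UB-1$.
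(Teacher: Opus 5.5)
Your proposal is correct and follows essentially the same route as the paper's proof. It uses the same reduced-cost identity for the objective and the same lower bound $\sum_{v\in\mathcal V}\pi_v$ on the covering term from $\boldsymbol\pi\ge\boldsymbol 0$. It also uses dual feasibility over all of $\mathscr S$ to discard every reduced-cost term except $rc(\bar{\mathcal S},\boldsymbol\pi)$, which then combines with~\eqref{eq:sc_fix_condition} to give an objective value strictly greater than $\tau$.
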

\begin{proof}{Proof}
    See \S\ref{sec:EC-ProofProp1} in the e-companion.\Halmos
\end{proof}
Lemma~\ref{lemma:sc_fix} eliminates columns that cannot improve the incumbent bound $UB$. Since the coloring objective is integer-valued, improving \(UB\) is equivalent to finding an integer solution of~\eqref{SC} with objective value at most \(\tau:=UB-1\). For a dual feasible solution \({\boldsymbol\pi}\) of the MP, the value \(\sum_{v\in\mathcal V}\pi_v\) provides a valid lower bound on the MP objective, and the term \(\tau-\sum_{v\in\mathcal V}\pi_v\) represents the remaining gap to the target value \(\tau\). Therefore, any variable \(\xi_{\bar{\mathcal S}}\) satisfying \eqref{eq:sc_fix_condition} can be fixed to zero for this target value. The ZDD construction below is defined with respect to \(\tau\).

\subsubsection{Generating Alternative or Multiple Dual Vectors.}\label{sec:genaltdual}

According to \citet{sellmann2004theoretical} and \citet{Mingozzi2013}, suboptimal or feasible dual solutions can strengthen the effect of reduced-cost fixing. Both \citet{De2023} and \citet{yang2024} design dual formulations to generate high-quality dual vectors for reduced-cost fixing. Their approaches are based on solving auxiliary optimization problems that maximize objective functions involving variable reduced costs and the dual bound, thereby seeking a favorable trade-off between increasing reduced costs and tightening the bound. However, these formulations aim at \emph{globally} strong dual vectors and do not explicitly encode the \emph{fixing condition} for any specific variable. The resulting dual vector may achieve a good overall trade-off while still failing to push a targeted variable beyond the fixing threshold, especially on instances with small optimality gaps, where proving optimality may hinge on eliminating variables whose reduced costs are close to the fixing threshold.

To address this issue, we introduce a new dual-based formulation that searches for diverse dual-feasible solutions tailored to certify the elimination of selected variables. For a given target value \(\tau\) and a maximal stable set \(\mathcal{S}\), or equivalently the associated variable \(\xi_{\mathcal{S}}\), Lemma~\ref{lemma:sc_fix} implies that it suffices to find a dual feasible solution \({\boldsymbol \pi}\in\mathbb{R}^{|\mathcal{V}|}_{\ge 0}\) such that
$rc(\mathcal{S},{\boldsymbol \pi})>\tau-\sum_{v\in\mathcal{V}}\pi_v$, which guarantees that \(\xi_{\mathcal{S}}=0\) in any feasible integer solution of~\eqref{SC} with objective value at most \(\tau\). In practice, given a subset \(\hat{\mathscr{S}} \subseteq \mathscr{S}\), we solve the following parametric optimization problem, which is also referred to as the dual formulation \(DF\) in the sequel, to obtain such a vector, using \(\varepsilon>0\) as a numerical tolerance and \(M\) as a weighting constant:
\begin{subequations}\label{dual_form}
\begin{align}
DF(\hat{\mathscr{S}}) \quad
&& \max_{{\boldsymbol \pi} \ge {\boldsymbol 0},{\boldsymbol \alpha} \ge {\boldsymbol 0}} \quad \sum_{v \in \mathcal{V}} \pi_v \;-\; M \sum_{\mathcal{S} \in \hat{\mathscr{S}}} \alpha_{\mathcal{S}}
\label{obj_df} \\
&& \sum_{v \in \mathcal{S}} \pi_v &\le 1,
&  \mathcal{S} \in \mathscr{S},
\label{cover_df} \\
&& 1-\sum_{v \in \mathcal{S}} \pi_v + \alpha_{\mathcal{S}}
&\ge
\tau - \sum_{v \in \mathcal{V}} \pi_v + \varepsilon,
&  \mathcal{S} \in \hat{\mathscr{S}}.
\label{diff_df}
\end{align}
\end{subequations}
In the objective~\eqref{obj_df}, \(\sum_{v\in\mathcal{V}}\pi_v\) is the dual lower bound of the RMP, while the slacks \(\alpha_{\mathcal{S}}\) measure how far each associated variable \(\xi_{\mathcal S}\) is from meeting the fixing threshold based on constraints \eqref{diff_df} which link each \(\alpha_{\mathcal{S}}\) to the reduced cost of \(\xi_{\mathcal{S}}\). Constraints \eqref{cover_df} impose dual feasibility. If \(\alpha_{\mathcal{S}}=0\), then \(\xi_{\mathcal{S}}\) can be safely excluded from any solution with objective value at most $\tau$; otherwise, the current dual solution \({\boldsymbol \pi}\) cannot certify its elimination. Hence, instead of rewarding reduced costs only through an aggregate objective, constraints \eqref{diff_df} directly enforce a threshold-driven mechanism, while the objective balances improvement of the dual bound against the total redundancy \(\sum_{\mathcal{S}\in\hat{\mathscr{S}}}\alpha_{\mathcal{S}}\). This design favors dual solutions that are not only globally strong but also specifically effective at pushing selected variables to the fixing threshold, thereby differing from previous dual models in explicitly incorporating the fixing condition.  The weight $M$ controls the trade-off between redundancy reduction and dual-bound improvement, while the tolerance $\varepsilon$ provides a safety margin and promotes diversity among dual solutions.

From an algorithmic standpoint, \(DF(\hat{\mathscr{S}})\) naturally supports the generation of a diverse pool of dual-feasible solutions that 
are useful not only for reduced-cost fixing but also for strengthening the lower bound used to certify whether a feasible integer solution with objective value at most \(\tau\) exists.
Following \citet{De2023}, we focus on basic variables in the optimal RMP basis, since fixing such variables forces a basis violation and empirically yields high-quality, complementary suboptimal dual solutions. 
Accordingly, we instantiate one \(DF(\hat{\mathscr{S}})\) model, defined in~\eqref{dual_form}, per targeted basic variable, seeking a dual solution that simultaneously pushes the targeted variable beyond the fixing threshold and maintains a strong dual objective value.
The benefit of this procedure is that some basic MP variables, corresponding to maximal stable sets that cannot be fixed by the optimal dual solution of the MP, may become certifiably irrelevant under the alternative dual solutions generated by \(DF(\hat{\mathscr{S}})\). After these variables are removed, reoptimizing the MP over the reduced pricing space may yield a stronger lower bound for certifying whether a feasible integer solution with objective value at most \(\tau\) exists.
In some cases, the improved lower bound may directly certify optimality by closing the remaining gap to the incumbent upper bound. Because \(DF(\hat{\mathscr{S}})\) contains a large family of covering constraints~\eqref{cover_df}, we solve it by row generation, using the same two-phase pricing oracle as in the root-node column generation. 
The heuristic pricer is used first to identify violated constraints. If none are found, the exact pricer is invoked, and row generation terminates only when it certifies that no negative reduced-cost maximal stable set exists. In this sense, the role of \(DF(\hat{\mathscr{S}})\) is twofold: it is both a fixing mechanism and a lower-bound strengthening device.

Model \(DF(\hat{\mathscr{S}})\) is used to generate alternative dual solutions that are feasible with respect to the dual of the MP. These feasible dual solutions can be directly used for reduced-cost fixing, as shown in Lemma~\ref{lemma:sc_fix}. In addition to these feasible dual solutions, we also exploit the dual vectors obtained during the root-node column generation process. 
Before column generation terminates, such dual vectors may still be infeasible for the full MP, since some columns with negative reduced cost may not yet have been generated. 
Nevertheless, these infeasible dual vectors can still provide valid fixing information under additional conditions. This motivates the following reduced-cost fixing lemma using a not necessarily dual-feasible vector.
\begin{lemma}
\label{lemma:infeasible_fix}
Let \(\tau\in\mathbb{Z}_{\ge 0}\) be a target value, and let
\({\boldsymbol \pi}\in\mathbb{R}^{|\mathcal{V}|}_{\ge 0}\) satisfy
\(rc^*({\boldsymbol \pi})<0\). If \(\bar{\mathcal S}\in\mathscr S\) satisfies
\begin{equation}
\label{eq:infeasible_fix_condition}
rc(\bar{\mathcal S},{\boldsymbol \pi})
>
\tau-\sum_{v\in\mathcal V}\pi_v-(\tau-1)rc^*({\boldsymbol \pi}),
\end{equation}
then no feasible integer solution of~\eqref{SC} with objective value at most \(\tau\)
satisfies \(\xi_{\bar{\mathcal S}}=1\).
\end{lemma}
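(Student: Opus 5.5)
The plan is a direct contradiction argument that mirrors Lemma~\ref{lemma:sc_fix}. The only change is that dual feasibility of $\boldsymbol\pi$ is replaced by the weaker fact that every column has reduced cost at least $rc^*(\boldsymbol\pi)$.

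Suppose, for contradiction, that $\boldsymbol\xi$ is a feasible integer solution of~\eqref{SC} with $\xi_{\bar{\mathcal S}}=1$. Let $\mathscr T:=\{\mathcal S\in\mathscr S:\xi_{\mathcal S}=1\}$ and $k:=|\mathscr T|$, and assume $k\le\tau$. Since $\bar{\mathcal S}\in\mathscr T$, we have $1\le k\le\tau$.

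The first step sums the reduced costs over $\mathscr T$:
\[
\sum_{\mathcal S\in\mathscr T} rc(\mathcal S,\boldsymbol\pi)=k-\sum_{\mathcal S\in\mathscr T}\sum_{v\in\mathcal S}\pi_v\le k-\sum_{v\in\mathcal V}\pi_v .
\]
The inequality holds because every vertex is covered at least once by $\mathscr T$ and $\boldsymbol\pi\ge\boldsymbol 0$, so each $\pi_v$ is counted at least once in the double sum.

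The second step isolates $\bar{\mathcal S}$ and bounds each of the other $k-1$ terms from below by $rc^*(\boldsymbol\pi)$. This gives
\[
rc(\bar{\mathcal S},\boldsymbol\pi)\le k-\sum_{v\in\mathcal V}\pi_v-(k-1)\,rc^*(\boldsymbol\pi).
\]

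The last step is monotonicity in $k$. Because $rc^*(\boldsymbol\pi)<0$, the right-hand side equals $k\bigl(1-rc^*(\boldsymbol\pi)\bigr)+rc^*(\boldsymbol\pi)-\sum_v\pi_v$, which is increasing in $k$. Substituting $k\le\tau$ therefore yields
\[
rc(\bar{\mathcal S},\boldsymbol\pi)\le\tau-\sum_{v\in\mathcal V}\pi_v-(\tau-1)\,rc^*(\boldsymbol\pi),
\]
which contradicts~\eqref{eq:infeasible_fix_condition}.

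The one delicate point is the lower bound $rc(\mathcal S,\boldsymbol\pi)\ge rc^*(\boldsymbol\pi)$ for every $\mathcal S\in\mathscr T$. This requires $rc^*(\boldsymbol\pi)$ to be the minimum over all of $\mathscr S$, i.e., the unrestricted pricing problem with $\overline{\mathscr S}_0=\emptyset$. That is exactly the root-node setting in which the lemma is applied, and I would state this reading explicitly. If forbidden columns were excluded from the minimum, the bound could fail, since a forbidden set may have smaller reduced cost.

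Once this is settled, the remaining steps are routine inequalities. The hypothesis $rc^*(\boldsymbol\pi)<0$ is used only in the monotonicity step. A sanity check: in the boundary case $rc^*(\boldsymbol\pi)=0$, where $\boldsymbol\pi$ is dual feasible and optimal, the statement reduces to Lemma~\ref{lemma:sc_fix}.
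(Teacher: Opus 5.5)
Your proof is correct and is essentially the paper's argument. Both start from $z \ge \sum_{\mathcal S} rc(\mathcal S,\boldsymbol\pi)\xi_{\mathcal S} + \sum_{v\in\mathcal V}\pi_v$, isolate $\bar{\mathcal S}$, bound the other $z-1$ reduced costs below by $rc^*(\boldsymbol\pi)$, and combine $z\le\tau$ with $rc^*(\boldsymbol\pi)<0$. The only cosmetic difference is that you use monotonicity in $k$ of the whole right-hand side, whereas the paper replaces $(z-1)rc^*(\boldsymbol\pi)$ by $(\tau-1)rc^*(\boldsymbol\pi)$ and concludes $z>\tau$; your caveat that $rc^*(\boldsymbol\pi)$ must be the minimum over the full family $\mathscr S$ is exactly the reading the paper adopts when it defines $\bm\Pi_I$.
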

\begin{proof}{Proof}
    See \S\ref{sec:EC-ProofProp2} in the e-companion.\Halmos
\end{proof}
Let $\bm\Pi_F$ and $\bm\Pi_I$ denote sets of feasible dual solutions and not necessarily dual-feasible vectors, respectively, and define the set of dual vectors for fixing as follows:
\[
\bm\Pi=\bm\Pi_F\cup\bm\Pi_I
=\{\boldsymbol\pi_1,\boldsymbol\pi_2,\ldots,\boldsymbol\pi_q\},
\qquad q:=|\bm\Pi|.
\]
For \(h\in\{1,2,\ldots,q\}\) and \(v\in\mathcal V\), let \(\pi_{h,v}\) denote the component of \(\boldsymbol\pi_h\) associated with vertex \(v\). For each \(\boldsymbol\pi_h\in\bm\Pi\), we associate a fixing threshold \(\delta_h\), defined as
\begin{equation}
\label{eq:delta_def}
\delta_h :=
\begin{cases}
\tau-\displaystyle\sum_{v\in\mathcal V}\pi_{h,v},
& \boldsymbol\pi_h\in\bm\Pi_F,\\[2mm]
\tau-\displaystyle\sum_{v\in\mathcal V}\pi_{h,v}-(\tau-1)rc^*(\boldsymbol\pi_h),
& \boldsymbol\pi_h\in\bm\Pi_I,
\end{cases}
\qquad h\in\{1,2,\ldots,q\}.
\end{equation}
We denote by \(\boldsymbol\delta=(\delta_1,\delta_2,\ldots,\delta_q)\) the vector collecting these fixing thresholds. 
Each vector \(\boldsymbol\pi_h\in\bm\Pi_F\) is dual feasible for the full MP. Each vector \(\boldsymbol\pi_h\in\bm\Pi_I\) satisfies \(\boldsymbol\pi_h\ge \boldsymbol 0\) and \(rc^*(\boldsymbol\pi_h)<0\), where \(rc^*(\boldsymbol\pi_h)\) denotes the minimum reduced cost over the full family \(\mathscr S\), or equivalently a certified global lower bound on the reduced costs of all columns in \(\mathscr S\).
For a maximal stable set \(\bar{\mathcal S}\in\mathscr S\), if there exists \(h\in\{1,2,\ldots,q\}\) such that
\(
rc(\bar{\mathcal S},\boldsymbol\pi_h)>\delta_h,
\)
then \(\bar{\mathcal S}\) cannot belong to any feasible integer solution of~\eqref{SC} with objective value at most \(\tau\).

\subsubsection{ZDD-Based Column Reduction.}\label{subsec:ZDD_reduce}
To reduce the ZDD while preserving optimality, we integrate dual-based reduced-cost fixing into the ZDD construction. Using the set of alternative dual vectors and fixing thresholds \((\bm\Pi,\boldsymbol\delta)\), we discard maximal stable sets certified as non-improving with respect to the target objective value \(\tau\). More importantly, whenever such a certificate applies to all feasible completions of a partial branch, the corresponding ZDD subtree is pruned altogether.
We formalize this early-elimination test as a pruning rule. Theorem~\ref{thm:pruning_rule} establishes a monotonicity property that justifies the pruning and enables a \emph{reduced} ZDD containing only promising maximal stable sets.
\begin{thm}
\label{thm:pruning_rule}
Let
\(\bm\Pi=\{\boldsymbol\pi_1,\boldsymbol\pi_2,\ldots,\boldsymbol\pi_q\}\) and let
\(\boldsymbol\delta=(\delta_1,\delta_2,\ldots,\delta_q)\) be defined by
\eqref{eq:delta_def}. Let \(\bar{\mathcal S}\) be a partial stable set and let
\(\mathcal C\subseteq\mathcal V\) be such that every maximal stable set
\(\hat{\mathcal S}\) generated below the current branch satisfies
\(
\bar{\mathcal S}\subseteq\hat{\mathcal S}\subseteq\bar{\mathcal S}\cup\mathcal C.
\)
If there exists \(h\in\{1,2,\ldots,q\}\) such that
\begin{equation}
	\label{eq:pruning_condition}
	1-\sum_{v\in\bar{\mathcal S}\cup\mathcal C}\pi_{h,v}>\delta_h,
\end{equation}
then no feasible integer solution of~\eqref{SC} with objective value at most \(\tau\) can satisfy \(\xi_{\hat{\mathcal S}}=1\).
\end{thm}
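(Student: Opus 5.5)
The argument is a monotonicity reduction to the two fixing lemmas. Fix any maximal stable set \(\hat{\mathcal S}\) generated below the current branch, so \(\bar{\mathcal S}\subseteq\hat{\mathcal S}\subseteq\bar{\mathcal S}\cup\mathcal C\), and fix the index \(h\) for which \eqref{eq:pruning_condition} holds. We show that \(rc(\hat{\mathcal S},\boldsymbol\pi_h)>\delta_h\). Then the appropriate lemma finishes the proof: Lemma~\ref{lemma:sc_fix} if \(\boldsymbol\pi_h\in\bm\Pi_F\), and Lemma~\ref{lemma:infeasible_fix} if \(\boldsymbol\pi_h\in\bm\Pi_I\). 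This is exactly the case split built into the definition \eqref{eq:delta_def} of \(\delta_h\).

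\emph{Step 1 (monotonicity).} Every \(\boldsymbol\pi_h\in\bm\Pi\) is nonnegative componentwise. For \(\bm\Pi_F\) this holds because dual feasible vectors lie in \(\mathbb R^{|\mathcal V|}_{\ge 0}\); for \(\bm\Pi_I\) it is assumed explicitly. Since \(\hat{\mathcal S}\subseteq\bar{\mathcal S}\cup\mathcal C\), we get
\[
\sum_{v\in\hat{\mathcal S}}\pi_{h,v}\le\sum_{v\in\bar{\mathcal S}\cup\mathcal C}\pi_{h,v},
\qquad\text{and hence}\qquad
rc(\hat{\mathcal S},\boldsymbol\pi_h)=1-\sum_{v\in\hat{\mathcal S}}\pi_{h,v}\ge 1-\sum_{v\in\bar{\mathcal S}\cup\mathcal C}\pi_{h,v}>\delta_h .
\]
The lower inclusion \(\bar{\mathcal S}\subseteq\hat{\mathcal S}\) is not needed for this inequality; only the upper containment and nonnegativity matter.

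\emph{Step 2 (invoking the lemmas).} If \(\boldsymbol\pi_h\in\bm\Pi_F\), then \(\delta_h=\tau-\sum_v\pi_{h,v}\). The inequality from Step 1 is then exactly \eqref{eq:sc_fix_condition} for \(\hat{\mathcal S}\in\mathscr S\), so Lemma~\ref{lemma:sc_fix} rules out \(\xi_{\hat{\mathcal S}}=1\) in any integer solution of \eqref{SC} with value at most \(\tau\). If \(\boldsymbol\pi_h\in\bm\Pi_I\), then \(rc^*(\boldsymbol\pi_h)<0\) and \(\delta_h\) equals the right-hand side of \eqref{eq:infeasible_fix_condition}, so Lemma~\ref{lemma:infeasible_fix} gives the same conclusion.

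\emph{Anticipated obstacle.} The only real issue is the \(\bm\Pi_I\) case. There \(rc^*(\boldsymbol\pi_h)\) must be the true minimum reduced cost over all of \(\mathscr S\), or a certified lower bound on it, for Lemma~\ref{lemma:infeasible_fix} to apply. If only a lower bound \(L\le rc^*(\boldsymbol\pi_h)\) is used, we argue as follows. Since \(\tau\ge 1\) is needed for a meaningful target, \(-(\tau-1)L\ge-(\tau-1)rc^*(\boldsymbol\pi_h)\). So the threshold computed from \(L\) is at least the exact threshold, and the lemma's hypothesis still holds. The case \(\tau=0\) is trivial, since no nonempty cover has value \(0\).
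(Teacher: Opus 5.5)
Your proof is correct and matches the paper's argument: nonnegativity of $\boldsymbol\pi_h$ and the containment $\hat{\mathcal S}\subseteq\bar{\mathcal S}\cup\mathcal C$ give $rc(\hat{\mathcal S},\boldsymbol\pi_h)>\delta_h$, and then Lemma~\ref{lemma:sc_fix} or Lemma~\ref{lemma:infeasible_fix} applies depending on whether $\boldsymbol\pi_h\in\bm\Pi_F$ or $\boldsymbol\pi_h\in\bm\Pi_I$. Your aside on replacing $rc^*(\boldsymbol\pi_h)$ by a certified lower bound is not needed here, since the paper already allows $rc^*(\boldsymbol\pi_h)$ to be such a bound and keeps only vectors for which the exact pricer was called.
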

\begin{proof}{Proof}
    See \S\ref{sec:EC-ProofProp3} in the e-companion.\Halmos
\end{proof}
Figure~\ref{fig:comparison_zdd}(c) shows the reduced ZDD constructed with target value \(\tau=2\), dual vector \(\boldsymbol\pi=(1,0,0,1,0,0)\), and fixing threshold \(\delta=0\). 
The goal is to build a ZDD that preserves all maximal stable sets that may participate in a feasible solution of~\eqref{SC} using at most two colors, while pruning those certified as irrelevant by reduced-cost fixing. The complete ZDD in Figure \ref{fig:comparison_zdd}(b) encodes four maximal stable sets. The reduced ZDD prunes node \(e\) and all branches that cannot participate in any feasible solution of value at most \(\tau\), retaining only the two paths corresponding to \(S_1\) and \(S_2\). The final reduced ZDD has size 20, consisting of six decision nodes, two terminal nodes, and twelve arcs; see \S\ref{sec:app_zdd_illustration} for the construction details.

\section{Overview of \BPname}\label{sec:algoverview}

This section presents the main components of \BPname\ and explains how they are integrated within the proposed BP framework. \BPname\ proceeds in five main stages. It first applies a preprocessing phase to reduce the size of graph $\mathcal{G}$ and, whenever possible, certify optimality directly (\S\ref{subsec:preprocessing}). Next, high-quality primal solutions are computed using heuristic algorithms that provide an initial upper bound and an initial pool of columns, each associated with a feasible maximal stable set,  for the CG process (\S\ref{subsec:primal}). The algorithm then solves the root node of the BP tree by CG, based on the computation of safe lower bounds and early termination criteria (\S\ref{subsec:safe_bounds}) and using a two-phase pricing strategy for the weighted maximum stable set problem in order to compute strong lower bounds (\S\ref{subsec:lb_zdd}). After the root node is solved, \BPname\ attempts to construct either a complete or a reduced ZDD, which is subsequently used as an exact pricing structure in the branching nodes. The search proceeds through a branching strategy based on fractional variables, with updates of the RMP and the ZDD-based pricer (\S\ref{subsec:branching}). Section \ref{sec:ZDDconstr} describes how ZDDs are constructed. 

\subsection{Preprocessing Strategies}\label{subsec:preprocessing}

\BPname\ begins with a preprocessing phase that reduces the number of vertices while preserving correctness, and may even certify optimality before the BP procedure starts. We employ two classical vertex-reduction techniques, as described in \citet{lucet2004pre}, and apply them iteratively until no further reduction is possible. The first technique removes dominated vertices: given two distinct vertices \(u\) and \(v\), if every vertex adjacent to \(u\) is also adjacent to \(v\), then \(u\) is dominated and can be deleted. The second technique removes low-degree vertices: any vertex with degree strictly less than a certified lower bound on the chromatic number can be deleted. After each removal, degrees and neighborhoods are updated in the reduced graph. Moreover, if at any point the reduced graph has at most as many vertices as the certified lower bound, then optimality is established immediately.
To make these reductions effective, we compute a strong certified lower bound on the chromatic number $\chi(\mathcal{G})$ as the maximum of two complementary bounds. The first is the clique bound, given by the size of a maximum clique, computed with \texttt{CliSAT}~\citep{Pablo2022}, a state-of-the-art exact algorithm for the Maximum Clique Problem, with a time limit of 1 second. The second is the Mycielski-based bound introduced by \citet{hebrard2020constraint}, which exploits pseudo-Mycielskian structures and can certify larger chromatic numbers even when the maximum clique is small. This technique is also used for reductions and optimality certification in \citet{brand2026customized}. We compute this bound with the greedy heuristic of \citet{hebrard2020constraint}, which tries to identify the Mycielskian of a subgraph and can be applied iteratively to obtain stronger bounds. 
After applying the vertex-reduction rules, we also compute a vertex ordering used in the subsequent ZDD construction. We follow the maximal path decomposition ordering of \citet{morrison2014characteristics}, also adopted by \citet{Morrison2016}. The method partitions the remaining vertices into vertex-disjoint paths, where each path is a sequence of distinct vertices such that consecutive vertices are adjacent, and orders the vertices by their appearance along these paths. In our implementation, each path starts from a minimum-degree vertex in the current residual graph and is extended by repeatedly selecting an unvisited neighbor of the last vertex with minimum degree. Once no further extension is possible, the path is closed, its vertices are removed, and the procedure continues on the residual graph until all vertices have been ordered.

\subsection{Computing Primal Solutions}\label{subsec:primal}

After the preprocessing phase, \BPname\ executes a heuristic algorithm from the literature to compute high-quality initial colorings and to establish an initial upper bound on the chromatic number. These solutions also populate the initial column pool with a feasible coloring.
Specifically, \BPname\ employs the first phase of the MMT heuristic proposed by \citet{malaguti2008metaheuristic}, an evolutionary algorithm based on tabu search. The procedure starts with an initial upper bound computed by the Dsatur heuristic~\citep{brelaz1979new} and then searches for feasible colorings with progressively fewer colors, moving toward a lower bound given by the size of a maximal clique. The heuristic terminates either when the prescribed time limit is reached or when no feasible coloring can be found for a target number of colors.

The time allocated to each MMT iteration is scaled based on the graph size after preprocessing, and a global time limit is imposed on the entire heuristic phase. Each tabu-search run follows the parameter setting proposed by \citet{malaguti2008metaheuristic}, including the maximum number of iterations, the initial population size, and the tabu tenure. The implementation of the first phase of the MMT heuristic was kindly provided by the authors of \citet{malaguti2008metaheuristic} via personal communication.

\subsection{Numerically Safe Lower Bounds and Early Termination Criteria} \label{subsec:safe_bounds}

Most modern LP solvers, including CPLEX used by \BPname, rely on floating-point arithmetic and may return dual solutions affected by small numerical errors.
To obtain numerically safe lower bounds at any branching node, we adopt the scaling-and-rounding technique of~\citet{Held2012}, which has also been used by~\citet{Baldacci2023} recently for the Bin Packing Problem. 

Let ${\boldsymbol \pi}\in\mathbb{R}^{\mathcal{V}}_{\ge 0}$ be an optimal dual solution of the RMP at the current CG iteration, and let $k>0$ be an integer scaling factor.
We define an integer vector $\check{\boldsymbol \pi}\in\mathbb{Z}^{\mathcal{V}}_{\ge 0}$ by setting $\check{\pi}_v=\lfloor k\,\pi_v\rfloor$ for all $v\in \mathcal{V}$.
By construction, for every $v\in {\mathcal{V}}$,
$\pi_v-\frac{1}{k}\le\frac{\check{\pi}_v}{k}\le\pi_v,$
and thus
$
\sum_{v\in\mathcal{V}_u}\pi_v-\frac{|\mathcal{V}_u|}{k}
\ \le\
\sum_{v\in\mathcal{V}_u}\frac{\check{\pi}_v}{k}
\ \le\
\sum_{v\in\mathcal{V}_u}\pi_v,
$
so that $\check{\boldsymbol \pi}/k$ is a componentwise underestimator of ${\boldsymbol \pi}$ and reduced-cost tests performed with $\check{\boldsymbol \pi}$ are conservative and can be carried out in exact integer arithmetic.
Without loss of optimality, we set $\check{\pi}_v=0$ for all $v\in\mathcal{V}_c$.
We then solve the PP using $\check{\boldsymbol \pi}$ so that all intermediate computations are performed in integer arithmetic.
Variables with negative \emph{scaled} reduced cost, i.e., $k-\sum_{v\in\mathcal{S}}\check{\pi}_v<0$, are generated and added to the RMP until no such variable exists.

At termination of this process, $\check{\boldsymbol \pi}/k$ is dual feasible for the LP relaxation of~\eqref{SC} at the current branching node; hence,
$LB_T(\check{\boldsymbol \pi})\;:=\;|\overline{\mathscr{S}}_1|\;+\;\left\lceil \sum_{v\in\mathcal{V}_u}\frac{\check{\pi}_v}{k}\right\rceil$
is a numerically safe integer lower bound on the number of stable sets (colors) required, already accounting for the variables whose lower bound is set to~1, and can be compared directly with the incumbent value $UB$ for pruning.

During CG, even before reaching dual feasibility with respect to all variables, we compute an additional numerically safe lower bound based on the classical result of~\citet{Farley1990} for set-covering problems; see also~\citet{Baldacci2023}.
Let $\check{\boldsymbol \pi}\in\mathbb{Z}^{\mathcal{V}}_{\ge 0}$ be the scaled integer dual vector used in pricing at the current CG iteration, and let $\widetilde{\mathcal{S}}$ be an optimal solution of the PP obtained with weights $\check{\boldsymbol \pi}$.
If $\sum_{v\in\widetilde{\mathcal{S}}}\check{\pi}_v>k$, we define the \emph{Farley bound} at the current branching node as
\begin{equation}
\label{eq:farley_bound}
LB_F(\check{\boldsymbol \pi})
\;:=\;
|\overline{\mathscr{S}}_1|
\;+\;
\left\lceil
\frac{\sum_{v\in\mathcal{V}_u}\check{\pi}_v}{\sum_{v\in\widetilde{\mathcal{S}}}\check{\pi}_v}
\right\rceil.
\end{equation}
This bound follows from a scaling argument.
The PP returns a maximal stable set $\widetilde{\mathcal{S}}$ maximizing $\sum_{v\in\mathcal{S}}\check{\pi}_v$ over all maximal stable sets allowed by branching; hence, for every such $\mathcal{S}$,
$
\sum_{v\in\mathcal{S}}\check{\pi}_v
\;\le\;
\sum_{v\in\widetilde{\mathcal{S}}}\check{\pi}_v.
$
Therefore, the rescaled vector $\check{\boldsymbol\pi}\big/\sum_{v\in\widetilde{\mathcal{S}}}\check{\pi}_v$ is dual feasible for the dual of the LP relaxation of~\eqref{SC} at the current branching node (i.e., feasible with respect to all dual constraints induced by maximal stable sets not forbidden by branching), and its dual objective value equals
$\sum_{v\in\mathcal{V}_u}\check{\pi}_v \big/ \sum_{v\in\widetilde{\mathcal{S}}}\check{\pi}_v$.
Adding $|\overline{\mathscr{S}}_1|$ and rounding up yields the lower bound~\eqref{eq:farley_bound}, which can be compared directly with the incumbent value $UB$ for pruning.

Since $LB_F(\check{\boldsymbol \pi})$ is valid even when improving variables still exist, it can be used to accelerate CG and to prune branching nodes safely.
Early CG termination is triggered when
$LB_F(\check{\boldsymbol \pi})
\;=\;
|\overline{\mathscr{S}}_1|
\;+\;
\left\lceil \sum_{v \in \mathcal{V}_u} \frac{\check{\pi}_v}{k} \right\rceil$,
in which case further CG iterations cannot improve the resulting integer lower bound at the current branching node. Moreover, safe branching node fathoming is applied when $LB_F(\check{\boldsymbol \pi}) \ge UB$, as the Farley bound is an integer lower bound already accounting for variables whose lower bound is set to~1, and therefore can be compared directly with the incumbent upper bound.
These criteria complement the termination based on the safe dual-feasible bound of~\citet{Held2012} and are applied throughout the branching tree of \BPname. 

\subsection{Pricing Strategies}\label{subsec:lb_zdd}

\BPname\ employs two main pricing strategies: at the root node of the BP tree (\S\ref{sec:PP-root}), where column generation is used to compute a strong lower bound, and at the branching nodes (\S\ref{sec:PP-enum}), where a ZDD-based exact pricer is applied once branching decisions alter the structure of the PP.

\subsubsection{Root-Node Pricing Strategy.} \label{sec:PP-root}

At the root node, the PP aims at identifying maximal stable sets with negative reduced cost and corresponds to the MWSSP, where vertex weights are obtained from the dual solution of the RMP at each CG iteration (see \S\ref{sec:SC}). Specifically, the dual values are multiplied by the scaling factor and rounded down to obtain integer weights, which are then used to compute numerically safe reduced costs and lower bounds (see \S\ref{subsec:safe_bounds}).

Since efficient heuristic and exact algorithms are available for the MWSSP, \BPname\ adopts a two-phase pricing scheme. At each pricing iteration, a heuristic algorithm is invoked to find a variable with a negative reduced cost. Only if the heuristic fails to identify such a variable is an exact algorithm invoked to solve the PP to optimality. If the exact pricer generates a negative reduced-cost variable, the next pricing iteration restarts with the heuristic pricer; otherwise, the column-generation process terminates.

As a heuristic pricer, \BPname\ employs \texttt{MN/TS}~\citep{wu2012multi}, a multi-start tabu search algorithm for the MWSSP.
The algorithm searches for a stable set with weight at least \(k+1\), corresponding to a variable with negative reduced cost. Once such a stable set is found during a tabu-search run, the current run is allowed to finish, but no further restarts are performed, and the heuristic pricer terminates.
As the exact pricer, \BPname\ employs \texttt{TSM-MWC}~\citep{jiang2018two}, a branch-and-bound algorithm for the MWSSP, used with its default parameter settings. To reduce the computational burden of both pricing procedures, vertices with zero dual value are discarded before solving the MWSSP. Whenever a negative reduced-cost stable set is identified, a vertex-insertion procedure is applied to extend it into a maximal stable set, and the corresponding column is then added to the RMP.

During the root-node column generation process, we collect intermediate RMP dual vectors and add them to \(\bm\Pi_I\). Specifically, an intermediate dual vector \(\boldsymbol{\pi}\) is retained only if the exact pricer has been called at the corresponding CG iteration, so that the value \(rc^*(\boldsymbol{\pi})\) is known exactly. This condition is necessary for safely applying Lemma~\ref{lemma:infeasible_fix}. Dual vectors for which only the heuristic pricer has been applied are not used for reduced-cost fixing. Since the CG process may generate a large number of candidate intermediate dual vectors, we keep at most 10 vectors in \(\bm\Pi_I\), i.e., \(|\bm\Pi_I|\le 10\). We retain the 10 most recently generated vectors, because those produced in later CG iterations typically yield tighter lower bounds and are thus more informative for subsequent fixing. When the root-node CG terminates with exact pricing certification, the final dual vector is dual-feasible for the MP and is added to \(\bm\Pi_F\). After the root-node CG process terminates, if the fixing threshold associated with the converged dual feasible solution of the MP is smaller than 1, the set of dual feasible solutions is further enriched with alternative dual feasible solutions returned by model \(DF\); these dual feasible solutions are also added to \(\bm\Pi_F\). The resulting set \(\bm\Pi=\bm\Pi_F\cup\bm\Pi_I\) is then used in the fixing procedure, with vectors in \(\bm\Pi_F\) and \(\bm\Pi_I\) handled according to Lemma~\ref{lemma:sc_fix} and Lemma~\ref{lemma:infeasible_fix}, respectively.

\subsubsection{Enumeration-Tree Pricing Strategy.}\label{sec:PP-enum}

After column generation terminates at the root node, \BPname\ attempts to construct a ZDD, which is subsequently used as an exact pricing data structure in the branching nodes. Once branching decisions are imposed on the variables of the SC formulation, the root-node pricing strategy cannot be applied directly, as the pricing problem structure changes. Depending on the quality of the upper bound \(UB\) and lower bound \(LB\) computed at the root node, \BPname\ constructs either a \emph{complete ZDD} or a \emph{reduced ZDD}. 
The reduced ZDD is built with respect to the target value \(\tau=UB-1\). Since the objective value of~\eqref{SC} represents the number of colors and is therefore integer-valued, finding a solution strictly better than the incumbent \(UB\) is equivalent to finding a feasible integer solution with objective value at most \(UB-1\) (if any). Thus, the goal at this stage is to determine whether there exists a feasible integer solution of cost at most \(\tau\). If such an improving solution exists, the reduced ZDD preserves the maximal stable sets that may participate in a solution of cost at most \(\tau\), thereby reducing the search space and accelerating the search for a better incumbent. If no such improving solution exists, the reduced ZDD provides a restricted but valid search space for certifying that no solution of value at most \(\tau\) exists, and hence that the incumbent value \(UB\) is optimal. Accordingly, maximal stable sets that are certified as unable to appear in any feasible integer solution of cost at most \(\tau\) are removed during the reduction.

In both cases, pricing is performed by assigning each high arc its corresponding scaled dual weight and each low arc a weight of zero, so that the pricing problem reduces to finding a longest root-to-\textsc{True} path in the ZDD, yielding a maximal stable set of minimum reduced cost. In the reduced-ZDD case, the root-node column-generation process is restarted over the reduced pricing space in order to recompute the corresponding lower bound. Since eliminated variables are certified irrelevant for any feasible integer solution with objective value at most \(\tau\), reoptimizing the root-node column generation over the reduced pricing space remains valid for certifying whether such a solution exists and may strengthen the corresponding lower bound.

\subsection{Branching Strategy}\label{subsec:branching}

At the root node, \BPname\ solves the pricing problem by a two-phase strategy that combines heuristic and exact algorithms for the MWSSP. After branching, however, the additional constraints alter the structure of the pricing problem, so these algorithms cannot be applied directly. For this reason, \BPname\ adopts a hybrid pricing strategy: the two-phase method is used at the root node, while the ZDD-based pricer is employed at the subsequent nodes of the BP tree.

Consider a generic node of the enumeration tree, where the current MP has been solved and yields an optimal solution \(\bm{\xi}^*\). If \(\bm{\xi}^*\) is integral, the node is fathomed. Otherwise, we branch on a variable \(\xi_{\mathcal{S}}\) associated with a maximal stable set \(\mathcal{S}\in\overline{\mathscr{S}}\) such that \(0<\xi_{\mathcal{S}}^*<1\). In our implementation, \(\xi_{\mathcal{S}}\) is selected as the most fractional variable in the current solution. Branching is then performed on \(\xi_{\mathcal{S}}\), generating two child nodes: in the left child, the variable \(\xi_{\mathcal{S}}\) is enforced by setting \(\xi_{\mathcal{S}}=1\), whereas in the right child it is forbidden by setting \(\xi_{\mathcal{S}}=0\). 
Accordingly, the maximal stable set \(\mathcal{S}\) associated with the branching variable is added to \(\overline{\mathscr{S}}_1\) in the left child and to \(\overline{\mathscr{S}}_0\) in the right child.
These branching decisions are enforced consistently in both the restricted master problem and the ZDD-based pricing procedure, as described in Section~\ref{subsec:ColumnGenerationBranchingScheme}.

Excluding variables may render the RMP temporarily infeasible if some vertex constraints are no longer covered by any remaining maximal stable set. When necessary, \BPname\ restores feasibility by adding artificial singleton columns for the affected vertices, with the same covering structure as singleton stable sets. These columns are introduced only to re-establish RMP feasibility during reoptimization and are handled consistently within the pricing/bounding scheme.

At the root node, the RMP is solved using the primal simplex method, which is well-suited to column-generation reoptimization after adding new columns. After branching, child nodes differ from their parent mainly through bound changes, and the RMP is reoptimized by dual simplex from the inherited basis. The branching tree is explored by a depth-first search strategy.

\subsection{ZDD Construction}\label{sec:ZDDconstr}

\BPname\ employs the recursive algorithm proposed in \citet{Morrison2016} to construct the complete ZDD denoted by $\mathcal{Z}_\mathscr{S}$ representing all maximal stable sets in $\mathcal{G}$. Specifically, the construction algorithm implicitly enumerates all possible maximal stable sets in the graph $\mathcal{G}=(\mathcal{V},\mathcal{E})$ while simultaneously constructing the ZDD graph $\mathcal{Z}_\mathscr{S}=(\mathcal{N},\mathcal{A})$ through recursive calls to a procedure named $MakeIndSetZDD$. During each iteration of $MakeIndSetZDD$, it expands a partially constructed stable set $\mathcal{R}$ using a set $\mathcal{U}=\{u_i,...,u_m\}$ of uncovered vertices. Vertices in $\mathcal{U}$ can be inserted into the partial stable set $\mathcal{R}$ to produce a larger stable set. To further extend the path induced by $\mathcal{R}$ in the ZDD graph using vertices from $\mathcal{U}$, the algorithm recursively constructs a ZDD subgraph with these vertices, inserting the ZDD nodes from this subgraph right after the ZDD path associated with $\mathcal{R}$. The algorithm constructs ZDD nodes corresponding to $\mathcal{U}$ through two recursive calls: in the high branch, it expands the stable set by adding $u_i$ to $\mathcal{R}$ and further explores a smaller set obtained by removing $u_i$ and its neighbors $N[u_i]$ from $\mathcal{U}$. In the low branch,  $u_i$ is forbidden from being used in $\mathcal{R}$, and the algorithm explores a smaller set obtained by omitting $u_i$ from $\mathcal{U}$.
To reduce the size of $\mathcal{Z}_\mathscr{S}$, the algorithm introduces a hash table to store nodes already inserted into $\mathcal{Z}_\mathscr{S}$. When a newly generated node has the same associated vertex, low child, and high child as a node in the hash table, it directly returns the existing node from the hash table to merge the nodes (see \citet{Morrison2016} for related details).

\begin{algorithm}[t]
\caption{MakeReducedIndSetZDD($U, i, RC$)}\label{makeIS}
\linespread{1.2}
\fontsize{9pt}{9pt}
\selectfont
\SetKw{KwGoTo}{go to}
\SetKwInOut{Parameter}{Parameters}
\LinesNumbered
\SetAlgoLined
\SetAlgoNoEnd

\KwIn{A set $U=\{u_1,u_2,\dots,u_m\}$ of uncovered vertices such that $u_j<u_{j+1}$ with respect to the vertex ordering on \(\mathcal V\), a current index \(i\), and an array $RC$ storing the reduced costs of the current stable set with respect to the dual-vector set $\bm\Pi$.}

\KwOut{The root node of a ZDD characterizing candidate maximal stable sets
in $\mathcal{G}[U]$ that may yield improving solutions and can be formed with
vertices in $\{u_i,u_{i+1},\dots,u_m\}$.}

\uIf{set $U_1=\{u_1,u_2,\dots,u_{i-1}\}$ has a vertex with no neighbor in $U_2=\{u_i,u_{i+1},\dots,u_m\}$}{
    \KwRet{$0$}\tcp*[r]{Maximality cannot be achieved}
}

\If{$U=\emptyset$}{
    \For{$h \gets 1$ \KwTo $q$}{
        \uIf{$RC[h]>\delta_h$}{
            \KwRet{$0$}\tcp*[r]{Fixed by reduced-cost fixing}
        }
    }
    \KwRet{$1$}\tcp*[r]{A maximal stable set is accepted}
}

$U_H \gets U \setminus N[u_i]$\tcp*[r]{High branch includes $u_i$}
$RC_H[h] \gets RC[h]-\pi_{h,u_i}$ for all $h \in\{1,2,\ldots,q\}$\tcp*[r]{Update reduced costs}
$l \gets \min\{j>i:u_j\in U_H\}$, or $|U_H|+1$ if none exists \tcp*[r]{\(j\) is indexed in \(U_H\)}

\eIf{\texttt{PruneByProp}$(RC_H,U_H,l,\bm\Pi,\boldsymbol\delta)$}{
    $z_h \gets 0$\tcp*[r]{Prune high subtree}
}{
    $z_h \gets \texttt{MakeReducedIndSetZDD}(U_H,l,RC_H)$\tcp*[r]{Continue high branch}
}

\eIf{\texttt{PruneByProp}$(RC,U,i+1,\bm\Pi,\boldsymbol\delta)$}{
    $z_l \gets 0$\tcp*[r]{Prune low subtree}
}{
    $z_l \gets \texttt{MakeReducedIndSetZDD}(U,i+1,RC)$\tcp*[r]{Continue low branch}
}

\uIf{$z_h=0$}{
    \KwRet{$z_l$}\tcp*[r]{ZDD reduction rule}
}

\uIf{$\exists z\in \mathcal{Z}_{\mathscr{S}}$ such that $v(z)=i$, $lo(z)=z_l$, and $hi(z)=z_h$}{
    \KwRet{$z$}\tcp*[r]{Reuse equivalent node}
}

\KwRet{$\mathcal{Z}_{\mathscr{S}}.\mathrm{insert}(i,z_l,z_h)$}\tcp*[r]{Create new ZDD node}

\end{algorithm}

To implement the ZDD reduction, we adapt the recursive procedure of \citet{Morrison2016} by integrating reduced-cost fixing to build a reduced ZDD \(\mathcal{Z}_{\mathscr{S}'}\) of potentially maximal stable sets in \(\mathcal{G}[U]\), where \(\mathcal{G}[U]\) denotes the subgraph of \(\mathcal{G}\) induced by the vertex set \(U\). 

Algorithm~\ref{makeIS}, referred to as \(\texttt{MakeReducedIndSetZDD}\), takes as input the uncovered set \(U=\{u_1,\dots,u_m\}\), the current index \(i\), and the reduced-cost array \(RC\). During the recursion, for each \(h\in\{1,2,\ldots,q\}\), \(RC[h]\) stores the reduced cost of the current partial stable set with respect to the dual vector \(\boldsymbol\pi_h\). At the root call, since the partial stable set is empty, each entry of \(RC\) is initialized to \(1\). Hence, when vertex \(u_i\) is selected, the reduced-cost array is updated by setting \(RC_{H}[h]=RC[h]-\pi_{h,u_i}\). Let \texttt{PruneByProp}$(RC,U,i,\bm\Pi,\boldsymbol\delta)$ denote the pruning test induced by Theorem~\ref{thm:pruning_rule}, where the candidate set \(\mathcal C\) in the theorem is instantiated as the subset of \(U\) consisting of vertices from the current index onward, i.e., \(\mathcal C=\{u_j\in U: j\ge i\}\). A feasibility test prunes early (lines 1--2): if $U_1$ contains a vertex with no neighbors in $U_2$, maximality cannot be achieved, and the false terminal is returned. When $U=\emptyset$, a reduced-cost check is applied (lines 3--7): if any entry of \(RC\) exceeds the corresponding fixing threshold in $\boldsymbol\delta$, the branch is cut, implementing reduced-cost fixing. The recursion branches on $u_i$. In the high branch, $u_i$ and its neighbors are removed, \(RC\) is updated w.r.t.\ $\bm\Pi$, and recursion continues (lines 8--14). In the low branch, $u_i$ is excluded (lines~15--18). Theorem~\ref{thm:pruning_rule} is applied in both cases for early pruning.
To keep the ZDD compact, nodes with an empty high branch are collapsed to their low child (lines 19--20). Canonical representation is ensured via hashing: existing equivalent nodes are reused, otherwise new ones are created (lines 21--23).

We define the family of retained maximal stable sets as
\begin{equation}
\label{eq:sred_def}
\mathscr S_{\rm red}:=
\left\{
\mathcal S\in\mathscr S:
rc(\mathcal S,\boldsymbol\pi_h)\le\delta_h
\text{ for every } h\in\{1,2,\ldots,q\}
\right\}.
\end{equation}
The following corollary justifies using $\mathcal{Z}_{\mathscr{S}_{\rm red}}$ 
as the pricing structure in the subsequent BP phase: all maximal stable sets eliminated by reduced-cost fixing are guaranteed to be excluded from the ZDD.

\begin{corollary}\label{cor:reduced_zdd_correctness}
	Assume Algorithm~\ref{makeIS} is initialized with
	\(RC[h]=rc(\emptyset,\boldsymbol\pi_h)=1\) for every
	\(h\in\{1,2,\ldots,q\}\), updates \(RC[h]\) according to the selected high-branch
	vertices, and applies the pruning rule of Theorem~\ref{thm:pruning_rule}.
	Then the returned ZDD encodes exactly the maximal stable sets in
	\(\mathscr S_{\rm red}\) defined in~\eqref{eq:sred_def}.
\end{corollary}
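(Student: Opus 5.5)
The argument has two inclusions, and I would organize it around an invariant of the recursion. Every call \(\texttt{MakeReducedIndSetZDD}(U,i,RC)\) is reached along a sequence of high/low decisions. I let \(\bar{\mathcal S}\) be the set of vertices chosen on high branches so far, so that \(U\) is the set of vertices not covered by \(N[\bar{\mathcal S}]\). The invariant is \(RC[h]=1-\sum_{v\in\bar{\mathcal S}}\pi_{h,v}=rc(\bar{\mathcal S},\boldsymbol\pi_h)\) for every \(h\). It follows by induction on recursion depth from the initialization \(RC[h]=1\) and the update \(RC_H[h]=RC[h]-\pi_{h,u_i}\); the low branch leaves \(RC\) unchanged.

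I would take the behaviour of the unmodified procedure of \citet{Morrison2016} as given. Without lines 4--7 and without the \texttt{PruneByProp} calls, it returns a ZDD whose root-to-\textsc{True} paths are in bijection with \(\mathscr S\). The maximality test in lines 1--2 and the node-merging and zero-suppression rules (lines 19--23) do not change the encoded family.

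\emph{Soundness (returned \(\subseteq \mathscr S_{\rm red}\)).} Every accepted path of the reduced procedure is also an accepted path of the unpruned procedure, because pruning only replaces subtrees by the \(0\) terminal. Hence every accepted set \(\mathcal S\) lies in \(\mathscr S\). An accepted path terminates in the branch \(U=\emptyset\) at line 8 with \(\bar{\mathcal S}=\mathcal S\). By the invariant, that test checks \(rc(\mathcal S,\boldsymbol\pi_h)\le\delta_h\) for all \(h\), so \(\mathcal S\in\mathscr S_{\rm red}\).

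\emph{Completeness (\(\mathscr S_{\rm red}\subseteq\) returned).} Fix \(\mathcal S\in\mathscr S_{\rm red}\) and follow its path in the unpruned recursion. At each node on the path, I must show that neither the line 5 test nor a \texttt{PruneByProp} test cuts the branch containing \(\mathcal S\). Line 5 is harmless by the invariant and the definition of \(\mathscr S_{\rm red}\).

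Suppose instead that \texttt{PruneByProp} fires at some child call with partial set \(\bar{\mathcal S}\) and candidate set \(\mathcal C=\{u_j\in U:j\ge i\}\). This means \(1-\sum_{v\in\bar{\mathcal S}\cup\mathcal C}\pi_{h,v}>\delta_h\) for some \(h\). Every set generated below that call satisfies \(\bar{\mathcal S}\subseteq\hat{\mathcal S}\subseteq\bar{\mathcal S}\cup\mathcal C\); this is exactly the hypothesis of Theorem~\ref{thm:pruning_rule}. In particular \(\mathcal S\subseteq\bar{\mathcal S}\cup\mathcal C\). Since \(\boldsymbol\pi_h\ge\boldsymbol 0\),
\[
rc(\mathcal S,\boldsymbol\pi_h)=1-\sum_{v\in\mathcal S}\pi_{h,v}\ \ge\ 1-\sum_{v\in\bar{\mathcal S}\cup\mathcal C}\pi_{h,v}>\delta_h ,
\]
which contradicts \(\mathcal S\in\mathscr S_{\rm red}\). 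This is precisely the monotonicity step behind Theorem~\ref{thm:pruning_rule}. Therefore the path of \(\mathcal S\) is never cut and reaches \textsc{True}. Merging and zero-suppression preserve the encoded family, so \(\mathcal S\) is encoded in the returned ZDD.

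The main obstacle is the containment \(\hat{\mathcal S}\subseteq\bar{\mathcal S}\cup\mathcal C\) at every call. Showing it requires checking how \(l\) is computed in the high branch and that \(i+1\) is passed in the low branch. Below a call with index \(i\), only vertices \(u_j\) with \(j\ge i\) are ever branched on in the high direction. Vertices \(u_j\) with \(j<i\) either were already excluded or remain in \(U_1\), where they serve only as maximality witnesses. I would establish this by induction on depth, alongside the \(RC\) invariant. The index bookkeeping between \(U\) and \(U_H\) (the comment that \(j\) is indexed in \(U_H\)) needs care at that point.
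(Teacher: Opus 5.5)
Your proposal is correct and follows essentially the same route as the paper's proof: correctness of the unpruned construction of \citet{Morrison2016}, the terminal test accepting exactly the complete paths with $rc(\mathcal S,\boldsymbol\pi_h)\le\delta_h$ for all $h$, the monotonicity step behind Theorem~\ref{thm:pruning_rule} showing that early pruning never cuts an element of $\mathscr S_{\rm red}$, and preservation of the encoded family under merging and zero-suppression. Your explicit invariant $RC[h]=rc(\bar{\mathcal S},\boldsymbol\pi_h)$ and your inductive check that every completion satisfies $\bar{\mathcal S}\subseteq\hat{\mathcal S}\subseteq\bar{\mathcal S}\cup\{u_j\in U: j\ge i\}$ make precise two points the paper leaves implicit. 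One minor slip: the $U=\emptyset$ terminal test is in lines 3--7, not line 8.
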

\begin{proof}{Proof}
    See \S\ref{sec:EC-ProofCor1} in the e-companion.\Halmos
\end{proof}

The computations required to solve formulation $DF(\hat{\mathscr{S}})$ \eqref{dual_form} and the implementation of Algorithm~\ref{makeIS}, based on Lemma~\ref{lemma:infeasible_fix}, are also performed in a numerically safe manner using scaling techniques (see \S\ref{subsec:safe_bounds}). In particular, the set of alternative dual solutions $\bm\Pi$ and the associated threshold vector $\boldsymbol\delta$ are represented in scaled integer form. This ensures that all reduced-cost comparisons are numerically safe and can be carried out using exact integer arithmetic. Implementation details are omitted for brevity.

\section{Computational Study}\label{sec:compstud}

This section presents a computational study evaluating the performance of \BPname\ on the DIMACS and Erd\H{o}s--R\'enyi benchmark sets for the GCP. It begins by describing the benchmark instances, computational environment, main parameter settings, and the state-of-the-art approaches used for comparison (\S\ref{subsec:benchmark} and \S\ref{subsec:environment_param}). Sections \ref{subsubsec:comparison_no_code} and \ref{subsubsec:comparison_with_code} then report comparisons with literature results, distinguishing between approaches with and without publicly available code. Finally, Section \ref{subsec:effect_components} analyzes the impact and effectiveness of the main phases of \BPname.

\subsection{Benchmark Instances}\label{subsec:benchmark}

The DIMACS instances, introduced during the Second DIMACS Implementation Challenge on \textit{Cliques, Coloring, and Satisfiability}~\citep{johnson1996cliques}, comprise 137 GCP instances and have served for over 30 years as a standard benchmark for evaluating both heuristic and exact algorithms. 
Despite significant algorithmic progress in recent years, many large and difficult DIMACS instances remain unsolved and continue to serve as open benchmarks that drive further research on the GCP.

To analyze and present the features of these 137 DIMACS instances, see Table~\ref{tab:families} in the e-companion for details, we grouped them into 22 families based on similar structural characteristics, using their name prefixes as the main grouping criterion.  
The testbed is highly diverse in size: the smallest instance has 11 vertices, while the largest has 10{,}000; the number of edges ranges from 20 to over four million. 

In addition to the classical DIMACS benchmark set, we use the Erd\H{o}s--R\'enyi random graph instances of~\citet{brand2026customized} to evaluate performance on graphs with varying densities. The instances follow the classical \(G(n,p)\) model~\citep{erdds1959random}, with \(n \in \{70,80,90,100,110\}\), \(p \in \{0.05,0.1,0.15,0.2,0.25,0.3,0.4,0.5,0.7,0.9\}\), and 100 graphs for each parameter pair, for a total of 5,000 instances. 
We report results for \BPname\ on the full set. For the controlled comparison with publicly available algorithms, we use the first 10 instances from each parameter group, yielding a representative subset of 500 instances that still covers sparse to dense graphs. The original instance set is available at \url{https://zenodo.org/records/17328845}.

\subsection{Computational Environment and State-of-the-art Approaches}\label{subsec:environment_param}

This section first describes the computational setting of \BPname\ (\S\ref{sec:compenv}) and then briefly presents the state-of-the-art approaches used to compare the performance of \BPname\ (\S\ref{subsec:comparison}).

\subsubsection{Computational Environment and Parameter Settings of \BPname.}\label{sec:compenv}

\BPname\ was implemented in Java, and all experiments were conducted on a Windows machine equipped with an Intel i7-12700 processor (2.1 GHz), 32 GB of RAM, and Java version 17.0.13. Our computational platform requires approximately 3.00 CPU seconds to execute the DIMACS MC Machine Benchmark on instance \texttt{r500.5} (see \url{http://archive.dimacs.rutgers.edu/pub/dsj/clique/}). The RMP arising in the column-generation process (see \S\ref{sec:SC}) and model $DF(\hat{\mathscr{S}})$ \eqref{dual_form} are handled by the LP solver of IBM CPLEX 12.8 in single-thread mode.

We provide a public repository containing all benchmark instances, their corresponding computational results, and the main implementation details, including an executable version of \BPname\ for testing and validation, along with the optimal colorings computed for instances solved to proven optimality.

The main parameter settings of \BPname\ are summarized as follows. For numerically safe reduced-cost computations and lower-bound evaluation, discussed in Section~\ref{subsec:safe_bounds}, the dual values are scaled by the factor \(k=10^{5}\). In the dual-based formulation \(DF(\hat{\mathscr{S}})\) of Section~\ref{sec:genaltdual}, the penalty weight and tolerance are set to \(M=10^{4}\) and \(\varepsilon=0.05\), respectively. In addition, at most 10 dual vectors generated during the root-node CG process are retained for the subsequent column-enumeration phase described in Section~\ref{subsec:ZDD_reduce}.
In the preprocessing phase, described in Section~\ref{subsec:preprocessing}, \texttt{CliSAT} is run with a time limit of 1 second. For the computation of initial heuristic colorings, \BPname\ applies the first phase of the MMT heuristic to all instances after preprocessing. The procedure starts from an initial upper bound obtained by \textsc{Dsatur} and then attempts to find feasible colorings with progressively fewer colors. For each target number of colors, the tabu-search procedure is limited to 10,000 iterations, with an initial population size of 10 and a tabu tenure of 45. The time limit for each target-color search is set to \(0.1|\mathcal{V}|\) seconds when the reduced graph has at most 400 vertices, and to 100 seconds otherwise. In addition, a global time limit of 100 seconds is imposed on the whole heuristic phase. The heuristic terminates either when the global time limit is reached or when no feasible coloring can be found for a target number of colors.

At the root node, within the pricing scheme of Section~\ref{subsec:lb_zdd}, the heuristic MWSSP pricer \texttt{MN/TS} is used to search for a stable set with a weight at least \(k+1\), i.e., a variable with negative reduced cost. Once such a stable set is found, the current tabu-search run is allowed to finish, but no further restarts are performed, and the heuristic pricer terminates. Its tabu-search depth is set to \(10(1-\mu)|\mathcal{V}|\), where \(\mu\) denotes the edge density of the graph, and the maximum number of restarts is fixed to \(|\mathcal{V}|/2\). The exact MWSSP pricer \texttt{TSM-MWC} is used with its default parameter settings. 

\subsubsection{State-of-the-art Approaches.}\label{subsec:comparison}

Two groups of methods are considered separately, depending on whether their source code is publicly available. The first group consists of exact algorithms whose source code is not publicly available. These methods are relevant because they represent important state-of-the-art approaches based on different exact-solution paradigms. In particular, the method of \citet{van2022graph} relies on a network-flow ILP formulation combined with decision diagrams, whereas the methods of \citet{malaguti2011exact}, \citet{Gualandi2012}, and \citet{Morrison2016} belong to the BP family. Among them, \citet{Morrison2016} is especially close in spirit to the present work, since it also integrates decision diagrams into the pricing phase. Since these algorithms were tested on different subsets of DIMACS instances and under different time limits, a fully controlled comparison is not possible. Therefore, we report an aggregate comparison based on the results available in the literature.

The second group consists of exact algorithms with publicly available source code, enabling more reproducible, controlled computational evaluation. These methods can be further divided into two main classes. The first class includes SAT-based approaches, such as \texttt{ZykovColor} and \texttt{Assignment} from \citet{brand2026customized}, as well as \texttt{gc-cdcl}~\citep{hebrard2020constraint}, \texttt{CliColCom}~\citep{heule2022cliques}, and \texttt{POP-S}~\citep{faber2024sat}. The second class includes \texttt{exactcolors}, the BP algorithm proposed by \citet{Held2012}, and \texttt{DSATUR}, the DSatur-based exact algorithm of \citet{san2012new}. Together, these algorithms represent some of the strongest publicly available exact methods for the GCP and cover a broad spectrum of modeling and search paradigms, including BP, SAT encodings, and DSatur-based branch-and-bound. 
All these publicly available algorithms are executed under the computational environment described in Section~\ref{sec:compenv}, thereby enabling a controlled assessment of the computational performance of \BPname. In addition, on the selected 500 Erd\H{o}s--R\'enyi benchmark instances, \BPname\ is compared with the same set of publicly available algorithms under this common experimental setting.

In the following, Sections~\ref{subsubsec:comparison_no_code} and~\ref{subsubsec:comparison_with_code} compare \BPname\ with state-of-the-art exact algorithms, distinguishing between approaches without and with publicly available source code.

\subsection{Comparison with Approaches without Publicly Available Source Code}
\label{subsubsec:comparison_no_code}

An aggregate comparison with \BPname\ is summarized below. However, differences in the tested DIMACS subsets and time limits preclude a comprehensive assessment.

\begin{itemize}
\item The best method reported by \citet{van2022graph} solves 50 instances from the full set of 137 DIMACS instances within a 1-hour time limit. On the same 137-instance benchmark set, \BPname\ solves all these 50 instances and 46 additional DIMACS instances, for a total of 96 solved instances under the same time limit.

\item \citet{malaguti2011exact} reports solving 65 instances on a subset of 109 DIMACS instances within a 10-hour time limit. On the same 109-instance subset, \BPname\ solves all 65 instances and 10 additional instances within 1 hour, for a total of 75 instances solved on that subset.

\item \citet{Gualandi2012} reports solving 10 instances on a subset of 17 DIMACS instances within a 10-hour time limit. On the same 17-instance subset, \BPname\ solves all 10 instances and 4 additional instances within 1 hour, for a total of 14 instances solved on that subset.

\item \citet{Morrison2016} reports solving 15 instances on a subset of 47 DIMACS instances within a 10-hour time limit. On the same 47-instance subset, \BPname\ solves 26 instances within 1 hour. This includes 14 of the 15 instances solved by \citet{Morrison2016}; the only instance solved by \citet{Morrison2016} but not by \BPname\ is \texttt{flat300\_28\_0}.
\end{itemize}

In summary, although the comparisons are not fully controlled due to differences in machines, time limits, and benchmark subsets, the available results indicate that \BPname\ is highly competitive with, and often substantially stronger than, previous exact BP and DD-based approaches on the reported DIMACS subsets. \BPname\ consistently solves substantially more instances, confirming its competitiveness as a state-of-the-art exact algorithm for the GCP. It is also worth noting that \BPname\ significantly improves upon previous BP algorithms. Although based on the same algorithmic framework, it is substantially stronger than the methods of \citet{Gualandi2012} and \citet{malaguti2011exact}, solving more instances in much shorter time. \BPname\ also surpasses the BP approach with decision diagrams proposed by \citet{Morrison2016} in both the number of solved instances and computational efficiency. 

\subsection{Comparison with Approaches with Publicly Available Source Code} \label{subsubsec:comparison_with_code}

The group of publicly available exact algorithms enables a more detailed and reproducible comparison. In particular, the recent study of \citet{brand2026customized} introduced two exact methods for the GCP, \texttt{ZykovColor} and \texttt{Assignment}. Together with \texttt{exactcolors}~\citep{Held2012}, \texttt{DSATUR}~\citep{san2012new}, \texttt{gc-cdcl}~\citep{hebrard2020constraint}, \texttt{CliColCom}~\citep{heule2022cliques}, and \texttt{POP-S}~\citep{faber2024sat}, these methods form the benchmark algorithms used in the following reproducible comparison. The corresponding source-code repositories are reported in the bibliographic entries of the cited papers. The computational study is carried out on the DIMACS instances and the selected Erd\H{o}s--R\'enyi random instances.

\subsubsection{Comparison on DIMACS Instances.}\label{subsec:DIMACS_op}
Table~\ref{tableForFamily} presents a detailed comparison of \BPname\ against the seven exact algorithms discussed above, evaluated on the full set of 137 DIMACS instances grouped by families. The first two columns report the family names and the corresponding number of instances (``\#inst''). The next 18 columns show, for each algorithm and each family, the number of instances solved to optimality (``\#opt'') and the average running time in seconds (``time''), computed over the solved instances. All algorithms were run on the same machine used for \BPname, with a time limit of 1 hour. 
Detailed computational results for the 137 DIMACS instances are reported in the e-companion; see \S\ref{subsec:detailed_comparison_with_code}.

\BPname\ is tested in two variants: one without using any upper bound as input, and one that incorporates the best known upper bound from the literature. The former allows a fair comparison with all other exact algorithms, which are fully self-contained and include their own internal heuristic subroutines. The latter, referred to as \BPUB, measures the performance of our exact method when assisted by the best available upper bound. Both variants are run with a 1-hour time limit. When the supplied upper bound equals the optimal chromatic number, the reported time corresponds to the time required to certify optimality.

Since our goal is to highlight performance, especially on hard instances of the GCP, all values below 0.01 seconds are reported in the table as ``\(\le 0.01\)''.
A dash (``-'') is reported in the table when none of the instances in a family are solved by a given algorithm. To facilitate comparison, the best results are highlighted in bold, prioritizing the number of solved instances and, in case of ties, the average running time. \BPUB\ is excluded from the boldface comparison because it uses external best-known upper bounds.

\begin{table}[]
\caption{Computational results of \BPname, \BPUB, and state-of-the-art exact GCP algorithms with available source code on the 137 DIMACS instances.}

  \centering
  \label{tableForFamily}
    \scalebox{0.75}{
      \footnotesize
      \renewcommand{\arraystretch}{1.5}
      \setlength{\tabcolsep}{3.0pt}
      \begin{tabular}{lr c *{9}{r r}}
    \toprule
    \multicolumn{1}{l}{\multirow{2}{*}{}} &
    \multicolumn{1}{c}{\multirow{2}{*}{ }} &
    \multicolumn{2}{c}{{\tt exactcolors}} &
    \multicolumn{2}{c}{{\tt DSATUR}} &
    \multicolumn{2}{c}{{\tt gc-cdcl}} &
    \multicolumn{2}{c}{{\tt CliColCom}} &
    \multicolumn{2}{c}{{\tt POP-S}} &
    \multicolumn{2}{c}{{\tt Assignment}} &
    \multicolumn{2}{c}{{\tt ZykovColor}} &
    \multicolumn{2}{c}{{\BPname}} &
    \multicolumn{2}{c}{\BPUB} \\
    \cmidrule(lr){3-4} \cmidrule(lr){5-6} \cmidrule(lr){7-8} \cmidrule(lr){9-10}
    \cmidrule(lr){11-12} \cmidrule(lr){13-14} \cmidrule(lr){15-16} \cmidrule(lr){17-18} \cmidrule(lr){19-20}
    Family & \#inst & \#opt & time & \#opt & time & \#opt & time & \#opt & time & \#opt & time & \#opt & time & \#opt & time & \#opt & time & \#opt & time \\
    \cmidrule(lr){1-2}
    \cmidrule(lr){3-4} \cmidrule(lr){5-6} \cmidrule(lr){7-8} \cmidrule(lr){9-10}
    \cmidrule(lr){11-12} \cmidrule(lr){13-14} \cmidrule(lr){15-16} \cmidrule(lr){17-18} \cmidrule(lr){19-20}
\texttt{C} & 3 & 0 & - & 0 & - & 0 & - & 0 & - & 0 & - & 0 & - & 0 & - & 0 & - & 0 & - \\

\texttt{DSJC} & 12 & 2 & 818.62 & 1 & $\le 0.01$ & 1 & 0.72 & 1 & 0.03 & 1 & 0.20 & 1 & 1.31 & 2 & 341.62 & \textbf{4} & 60.85 & 4 & 45.31 \\

\texttt{DSJR} & 3 & 3 & 1368.59 & 1 & 0.14 & 1 & 0.07 & 3 & 22.08 & 1 & 0.93 & 2 & 1.36 & 3 & 30.12 & \textbf{3} & 12.50 & 3 & 0.11 \\

\texttt{FullIns} & 14 & 5 & $\le 0.01$ & 12 & 0.20 & 13 & 12.00 & 14 & 46.00 & 14 & 8.23 & 14 & 4.57 & \textbf{14} & 3.06 & 11 & 1.70 & 11 & 0.15 \\

\texttt{GPIA} & 4 & 0 & - & 2 & 2.60 & 3 & 115.46 & 3 & 0.30 & \textbf{4} & 7.24 & 4 & 8.68 & 4 & 393.87 & 2 & 1301.81 & 2 & 1202.79 \\

\texttt{Insertions} & 11 & 1 & 587.41 & 3 & 799.53 & 3 & 1006.42 & 4 & 0.37 & 4 & 0.37 & \textbf{4} & 0.21 & 4 & 36.82 & 1 & 5.24 & 1 & 1.97 \\

\texttt{SocialNet} & 5 & 5 & 0.02 & 5 & $\le 0.01$ & 5 & $\le 0.01$ & 5 & 0.02 & 5 & 0.24 & 5 & 0.06 & 5 & 0.06 & \textbf{5} & $\le 0.01$ & 5 & $\le 0.01$ \\

\texttt{flat} & 6 & 2 & 714.64 & 0 & - & 0 & - & 0 & - & 0 & - & 0 & - & 0 & - & \textbf{2} & 261.49 & 3 & 157.24 \\

\texttt{fpsol} & 3 & 3 & 0.38 & 3 & $\le 0.01$ & 3 & 2.22 & 3 & 0.06 & 3 & 8.16 & 3 & 0.07 & 3 & 0.07 & \textbf{3} & $\le 0.01$ & 3 & $\le 0.01$ \\

\texttt{inithx} & 3 & 3 & 0.71 & 3 & 0.08 & 3 & 2.36 & 3 & 0.11 & 3 & 14.33 & 3 & 0.08 & 3 & 0.08 & \textbf{3} & $\le 0.01$ & 3 & $\le 0.01$ \\

\texttt{le} & 12 & 3 & 878.58 & 4 & 3.21 & 8 & 309.01 & 10 & 9.65 & 8 & 2.00 & 8 & 7.57 & 10 & 112.24 & \textbf{12} & 6.77 & 12 & $\le 0.01$ \\

\texttt{miles} & 5 & 5 & 0.08 & 5 & $\le 0.01$ & 5 & 0.53 & 5 & 0.03 & 5 & 4.65 & 5 & 0.06 & 5 & 0.06 & \textbf{5} & $\le 0.01$ & 5 & $\le 0.01$ \\

\texttt{mug} & 4 & 4 & 0.87 & 2 & 356.65 & 4 & 0.08 & \textbf{4} & 0.03 & 4 & 0.21 & 4 & 0.45 & 4 & 0.66 & 4 & 9.74 & 4 & 0.67 \\

\texttt{mulsol} & 5 & 5 & 0.09 & 5 & $\le 0.01$ & 5 & 0.42 & 5 & 0.04 & 5 & 3.94 & 5 & 0.06 & 5 & 0.07 & \textbf{5} & $\le 0.01$ & 5 & $\le 0.01$ \\

\texttt{myciel} & 5 & 2 & 9.57 & 3 & 0.48 & 5 & 0.02 & 4 & 206.43 & 4 & 383.06 & 5 & 0.06 & 5 & 0.06 & \textbf{5} & $\le 0.01$ & 5 & $\le 0.01$ \\

\texttt{others} & 3 & 2 & 0.74 & 2 & 0.12 & 2 & 0.24 & \textbf{2} & 0.11 & 2 & 1.36 & 2 & 0.40 & 2 & 0.69 & 2 & 55.19 & 2 & 5.68 \\

\texttt{qg.order} & 4 & 0 & - & 1 & 0.67 & 3 & 594.41 & 3 & 73.84 & 3 & 1066.31 & 4 & 521.33 & 3 & 1186.36 & \textbf{4} & 20.05 & 4 & $\le 0.01$ \\

\texttt{queen} & 13 & 7 & 35.15 & 6 & 106.15 & 6 & 35.06 & 5 & 2.26 & 8 & 463.91 & 6 & 80.80 & 8 & 38.55 & \textbf{9} & 329.23 & 12 & 4.35 \\

\texttt{r} & 9 & 7 & 42.21 & 7 & 35.03 & 7 & 5.72 & 8 & 417.27 & 7 & 27.46 & 8 & 155.66 & 7 & 14.09 & \textbf{9} & 471.45 & 9 & 145.93 \\

\texttt{school} & 2 & 2 & 1618.10 & 2 & 6.21 & 2 & 2.24 & 2 & 0.33 & 2 & 8.74 & 2 & 0.15 & 2 & 0.15 & \textbf{2} & $\le 0.01$ & 2 & $\le 0.01$ \\

\texttt{wap} & 8 & 1 & 5.49 & 1 & 0.91 & 1 & 4.29 & \textbf{4} & 116.98 & 4 & 1412.76 & 3 & 1322.23 & 2 & 226.27 & 2 & 2.85 & 5 & $\le 0.01$ \\

\texttt{zeroin} & 3 & 3 & 0.04 & 3 & $\le 0.01$ & 3 & 0.42 & 3 & 0.03 & 3 & 4.02 & 3 & 0.06 & 3 & 0.07 & \textbf{3} & $\le 0.01$ & 3 & $\le 0.01$ \\
\cmidrule(lr){1-2}
\cmidrule(lr){3-4} \cmidrule(lr){5-6} \cmidrule(lr){7-8} \cmidrule(lr){9-10}
\cmidrule(lr){11-12} \cmidrule(lr){13-14} \cmidrule(lr){15-16} \cmidrule(lr){17-18} \cmidrule(lr){19-20}
Total & 137 & 65 & & 71 & & 83 & & 91 & & 90 & & 91 & & 94 & & \textbf{96} & & 103 & \\
\bottomrule
\end{tabular}
    }
\end{table}

According to Table~\ref{tableForFamily}, 
among methods that do not use externally supplied upper bounds,
\BPname\ achieves the strongest overall performance, solving 96 instances. \BPUB\ solves 103 instances. In comparison, \texttt{ZykovColor} solves two fewer instances, while \texttt{Assignment}, \texttt{CliColCom}, and \texttt{POP-S} solve five fewer, \texttt{gc-cdcl} solves 13 fewer, \texttt{DSATUR} solves 25 fewer, and \texttt{exactcolors} solves 31 fewer instances. It is worth noting that, for \texttt{POP-S}, the results reported in the literature correspond to 91 solved instances, whereas on our machine we obtain 90. The difference is due to instance \texttt{wap02a}, which is solved in the literature results but not in our rerun, likely because of randomization in \texttt{POP-S}. For consistency with the reference study, the comparison below is therefore based on the literature value of 91 solved instances. It is worth noting that the only other BP algorithm considered, {\tt exactcolors}, demonstrates significantly lower computational performance than \BPname, despite using the same algorithmic paradigm. These results are consistent with those presented in the previous section and confirm that \BPname\ is by far the most effective BP algorithm for the GCP. While all the other approaches are based on different frameworks, most of which heavily rely on SAT solvers, \BPname\ still achieves highly competitive performance.

As far as the number of solved instances is concerned, \BPname\ outperforms all other approaches on several families of instances. 
\BPname\ is the only algorithm able to solve 4 out of the 12 instances in the {\tt DSJC} family (all other algorithms solve at most 2). 
In the {\tt le} family, \BPname\ solves all 12 instances, while the second-best algorithm, {\tt CliColCom}, solves only 10. In case of ties in the number of instances solved, we consider the second-best algorithm to be the one with the lowest average running time. 
In the {\tt queen} family, \BPname\ solves 9 out of 13 instances, outperforming the second-best algorithm, {\tt ZykovColor}, which solves 8.
On the other hand, \BPname\ is outperformed on the following families: in the {\tt FullIns} family, the best-performing algorithm is {\tt ZykovColor}, which solves all 14 instances, whereas \BPname\ solves only 11. In the {\tt GPIA} family, {\tt POP-S} solves all 4 instances, while \BPname\ solves only 2. 
For the {\tt Insertions} family, {\tt Assignment} performs best, solving 4 out of 11 instances, compared to only 1 by \BPname. The performance of \BPname\ is largely complementary to that of existing approaches, achieving state-of-the-art results on some instance families while being outperformed on others.

Regarding the efficiency on solved instances, \BPname\ demonstrates superior performance on several families. In the \texttt{DSJR} family, it achieves an average running time of 12.5 seconds, significantly faster than the second-best method \texttt{CliColCom}, which also solves all instances but requires 22.08 seconds. In the \texttt{qg.order} family, \BPname\ solves all instances in 20.05 seconds, outperforming \texttt{Assignment} by a large margin, as the latter requires 521.33 seconds. Similarly, in the \texttt{school} family, \BPname\ completes all instances in less than 0.01 seconds, while the second-best method, again \texttt{Assignment}, takes 0.15 seconds. On the other hand, in families such as \texttt{SocialNet}, \texttt{fpsol}, \texttt{inithx}, \texttt{miles}, \texttt{mulsol} and \texttt{zeroin}, \BPname\ solves all instances within 0.01 seconds, consistent with most of the other algorithms, indicating these families are relatively easier across methods.

When best-known upper bounds are provided to \BPname, \BPUB\ demonstrates superior performance in terms of both the number of solved instances and average running time. It successfully solves 7 additional instances that \BPname\ fails to solve, particularly from the \texttt{flat}, \texttt{queen}, and \texttt{wap} families. Moreover, among the 96 instances solved by both methods, \BPUB\ significantly reduces the average runtime across several families, with notable improvements observed in \texttt{DSJR} (from 12.5s to 0.11s), \texttt{le} (from 6.77s to $\le$0.01s), \texttt{others} (from 55.19s to 5.68s), and \texttt{qg.order} (from 20.05s to $\le$0.01s). 

\begin{figure}[t]
\caption{Number of DIMACS instances solved over time by \BPname\ and seven state-of-the-art exact GCP algorithms (137 instances, 1 hour time limit).}
\label{fig:survivalPlot_DIMACS}
\center
\includegraphics[width=0.95\columnwidth]{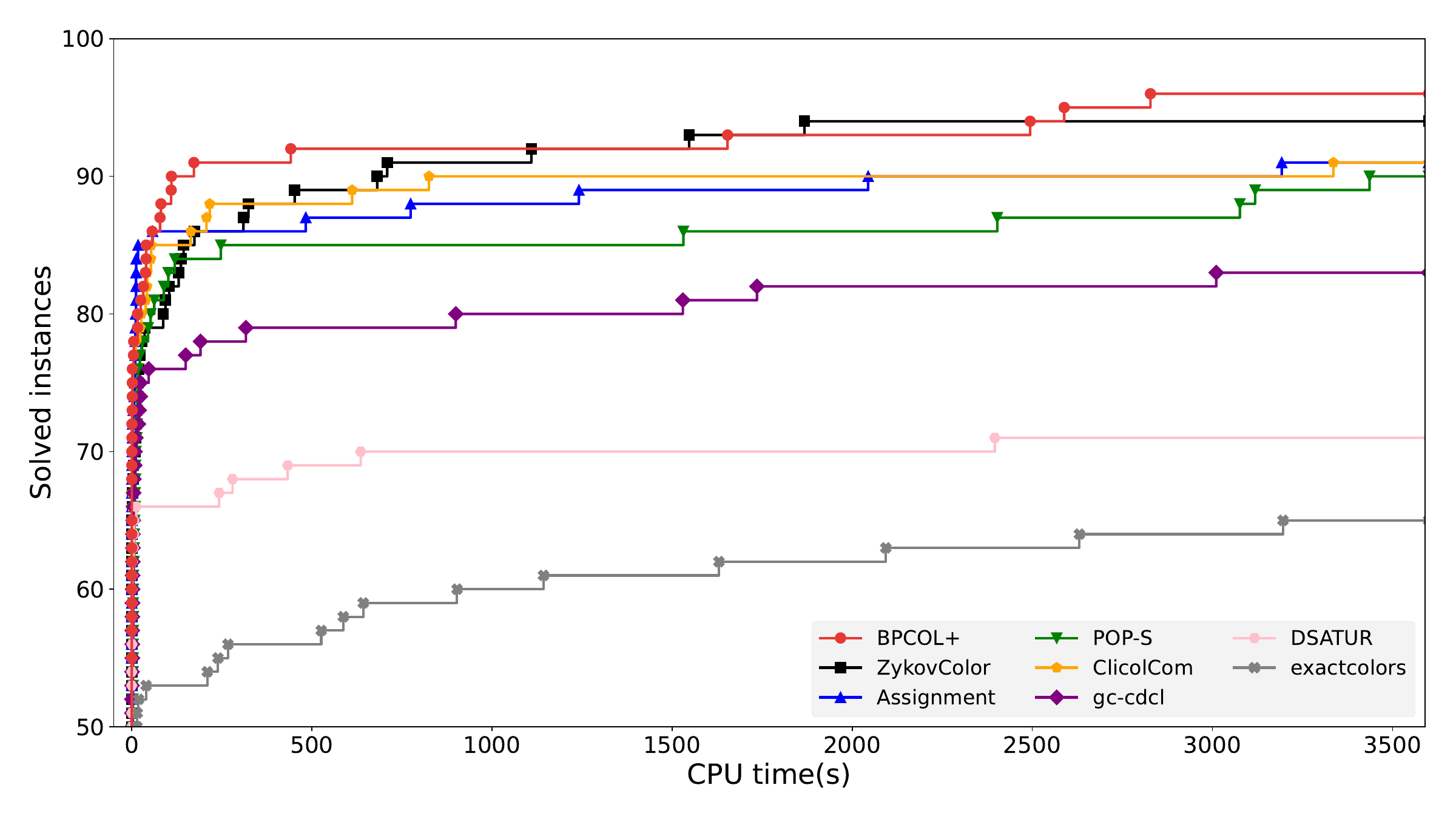}
\end{figure}

Figure~\ref{fig:survivalPlot_DIMACS} shows the number of instances solved as a function of computing time to visually compare the computational behavior of the eight algorithms over the full set of 137 DIMACS instances. The horizontal axis reports the elapsed running time, while the vertical axis shows the cumulative number of instances solved by each algorithm up to that time. Thus, each curve increases as more instances are solved. A curve that rises more rapidly indicates that the algorithm solves instances faster, whereas a higher final plateau indicates that more instances are solved within the prescribed time limit.
The figure shows that \BPname\ exhibits the best overall behavior. Its curve rises very rapidly at the beginning, indicating that many easy and medium-difficulty instances are solved within a short time. It reaches about 90 solved instances within the first 100 seconds and continues to improve afterward, eventually attaining a final plateau of 96. \texttt{ZykovColor} is the closest competitor: it also shows strong early performance and reaches a high final plateau, but remains below \BPname\ over most of the time horizon. \texttt{CliColCom}, \texttt{Assignment}, and \texttt{POP-S} form a second group of competitive methods. They solve many instances quickly at the beginning, but their curves plateau earlier and at lower levels. 
The remaining methods are less competitive on this benchmark set. \texttt{gc-cdcl} reaches a final plateau of 83 solved instances, while \texttt{DSATUR} and \texttt{exactcolors} solve substantially fewer instances, with final plateaus of 71 and 65 instances, respectively. Overall, the results indicate that \BPname\ is both efficient and robust, achieving the highest number of solved instances while maintaining strong early performance.

\subsubsection{Comparison on Erd\H{o}s--R\'enyi Instances.}

\begin{table}[t]
\caption{Computational results of \BPname and state-of-the-art exact GCP algorithms with available source code on the 500 Erd\H{o}s--R\'enyi instances grouped by density and by number of vertices.}
\centering
\label{tableForDensityClass}
\scalebox{0.75}{
\footnotesize
\renewcommand{\arraystretch}{1.5}
\setlength{\tabcolsep}{5.0pt}
\begin{tabular}{l c r *{8}{r r}}
\toprule
\multicolumn{1}{c}{\multirow{2}{*}{ }} &
\multicolumn{1}{c}{\multirow{2}{*}{ }} &
\multicolumn{1}{c}{\multirow{2}{*}{ }} &
\multicolumn{2}{c}{\texttt{exactcolors}} &
\multicolumn{2}{c}{\texttt{DSATUR}} &
\multicolumn{2}{c}{\texttt{gc-cdcl}} &
\multicolumn{2}{c}{\texttt{CliColCom}} &
\multicolumn{2}{c}{\texttt{POP-S}} &
\multicolumn{2}{c}{\texttt{Assignment}} &
\multicolumn{2}{c}{\texttt{ZykovColor}} &
\multicolumn{2}{c}{\texttt{\BPname}} \\
\cmidrule(lr){4-5} \cmidrule(lr){6-7} \cmidrule(lr){8-9} \cmidrule(lr){10-11}
\cmidrule(lr){12-13} \cmidrule(lr){14-15} \cmidrule(lr){16-17} \cmidrule(lr){18-19}
Group &  $\mu$ ($\%)$ & \#inst & \#opt & time & \#opt & time & \#opt & time & \#opt & time & \#opt & time & \#opt & time & \#opt & time & \#opt & time \\
\cmidrule(lr){1-3}
\cmidrule(lr){4-5} \cmidrule(lr){6-7} \cmidrule(lr){8-9} \cmidrule(lr){10-11}
\cmidrule(lr){12-13} \cmidrule(lr){14-15} \cmidrule(lr){16-17} \cmidrule(lr){18-19}
\texttt{ER*.05} & 5 & 50 & 49 & 23.70 & \textbf{50} & $\le 0.01$ & 50 & 0.03 & 50 & 0.09 & 50 & 0.33 & 50 & 0.24 & 50 & 0.24 & 50 & 67.20 \\
\texttt{ER*.1} & 10 & 50 & 49 & 130.54 & \textbf{50} &$\le 0.01$& 50 & 0.59 & 50 & 0.09 & 50 & 0.35 & 50 & 0.47 & 50 & 0.72 & 49 & 204.93 \\
\texttt{ER*.15} & 15 & 50 & 43 & 310.04 & 50 & 9.08 & 46 & 45.23 & 50 & 38.41 & \textbf{50} & 4.99 & 50 & 6.02 & 50 & 65.98 & 48 & 270.39 \\
\texttt{ER*.2} & 20 & 50 & 35 & 380.06 & \textbf{49} & 273.43 & 36 & 112.50 & 42 & 51.48 & 45 & 135.93 & 45 & 67.21 & 42 & 133.59 & 38 & 108.36 \\
\texttt{ER*.25} & 25 & 50 & 34 & 422.44 & \textbf{47} & 322.59 & 29 & 610.69 & 38 & 267.12 & 40 & 60.92 & 42 & 207.69 & 38 & 191.70 & 44 & 112.14 \\
\texttt{ER*.3} & 30 & 50 & 29 & 390.35 & 32 & 130.32 & 18 & 198.98 & 31 & 199.62 & 36 & 265.77 & 34 & 180.48 & 29 & 193.65 & \textbf{43} & 228.28 \\
\texttt{ER*.4} & 40 & 50 & 33 & 274.93 & 29 & 124.48 & 13 & 1807.30 & 20 & 319.72 & 28 & 377.05 & 27 & 73.58 & 25 & 377.49 & \textbf{43} & 56.58 \\
\texttt{ER*.5} & 50 & 50 & 37 & 497.88 & 22 & 348.25 & 6 & 1085.35 & 12 & 316.25 & 12 & 180.69 & 15 & 846.04 & 11 & 239.25 & \textbf{46} & 88.75 \\
\texttt{ER*.7} & 70 & 50 & 49 & 213.03 & 20 & 302.64 & 2 & 2329.52 & 13 & 616.80 & 12 & 461.94 & 19 & 135.33 & 10 & 1020.57 & \textbf{50} & 19.39 \\
\texttt{ER*.9} & 90 & 50 & \textbf{50} & 1.95 & 38 & 151.02 & 15 & 510.71 & 32 & 418.21 & 28 & 289.65 & 48 & 138.42 & 50 & 18.73 & 50 & 8.81 \\
\midrule
Group & $|\mathcal{V}|$ & \#inst & \#opt & time & \#opt & time & \#opt & time & \#opt & time & \#opt & time & \#opt & time & \#opt & time & \#opt & time \\
\cmidrule(lr){1-3}
\cmidrule(lr){4-5} \cmidrule(lr){6-7} \cmidrule(lr){8-9} \cmidrule(lr){10-11}
\cmidrule(lr){12-13} \cmidrule(lr){14-15} \cmidrule(lr){16-17} \cmidrule(lr){18-19}
\texttt{ER70} & 70 & 100 & 100 & 17.45 & \textbf{100} & 1.82 & 87 & 401.55 & 100 & 59.76 & 99 & 20.40 & 99 & 6.57 & 99 & 115.24 & 100 & 8.34 \\
\texttt{ER80} & 80 & 100 & 93 & 260.41 & 100 & 140.71 & 67 & 261.93 & 83 & 144.08 & 83 & 105.99 & 89 & 88.96 & 82 & 83.87 & \textbf{100} & 103.96 \\
\texttt{ER90} & 90 & 100 & 91 & 205.86 & 79 & 63.29 & 46 & 243.13 & 68 & 183.85 & 74 & 237.93 & 82 & 130.97 & 74 & 150.46 & \textbf{97} & 194.09 \\
\texttt{ER100} & 100 & 100 & 74 & 358.20 & 60 & 187.92 & 39 & 110.42 & 51 & 187.65 & 58 & 140.97 & 64 & 149.25 & 57 & 123.49 & \textbf{93} & 114.97 \\
\texttt{ER110} & 110 & 100 & 50 & 534.04 & 48 & 535.81 & 26 & 69.44 & 36 & 333.74 & 37 & 221.19 & 46 & 288.14 & 43 & 200.25 & \textbf{71} & 175.00 \\
\cmidrule(lr){1-3}
\cmidrule(lr){4-5} \cmidrule(lr){6-7} \cmidrule(lr){8-9} \cmidrule(lr){10-11}
\cmidrule(lr){12-13} \cmidrule(lr){14-15} \cmidrule(lr){16-17} \cmidrule(lr){18-19}
Total &  & 500 & 408 &  & 387 &  & 265 &  & 338 &  & 351 &  & 380 &  & 355 &  & \textbf{461} &  \\
\bottomrule

\end{tabular}
}
\end{table}

\begin{figure}[t]
\caption{Number of Erd\H{o}s--R\'enyi instances solved over time by \BPname\ and seven exact GCP algorithms (500 instances, 1 hour time limit).}
\label{fig:survivalPlotER}
\center
\includegraphics[width=0.95\columnwidth]{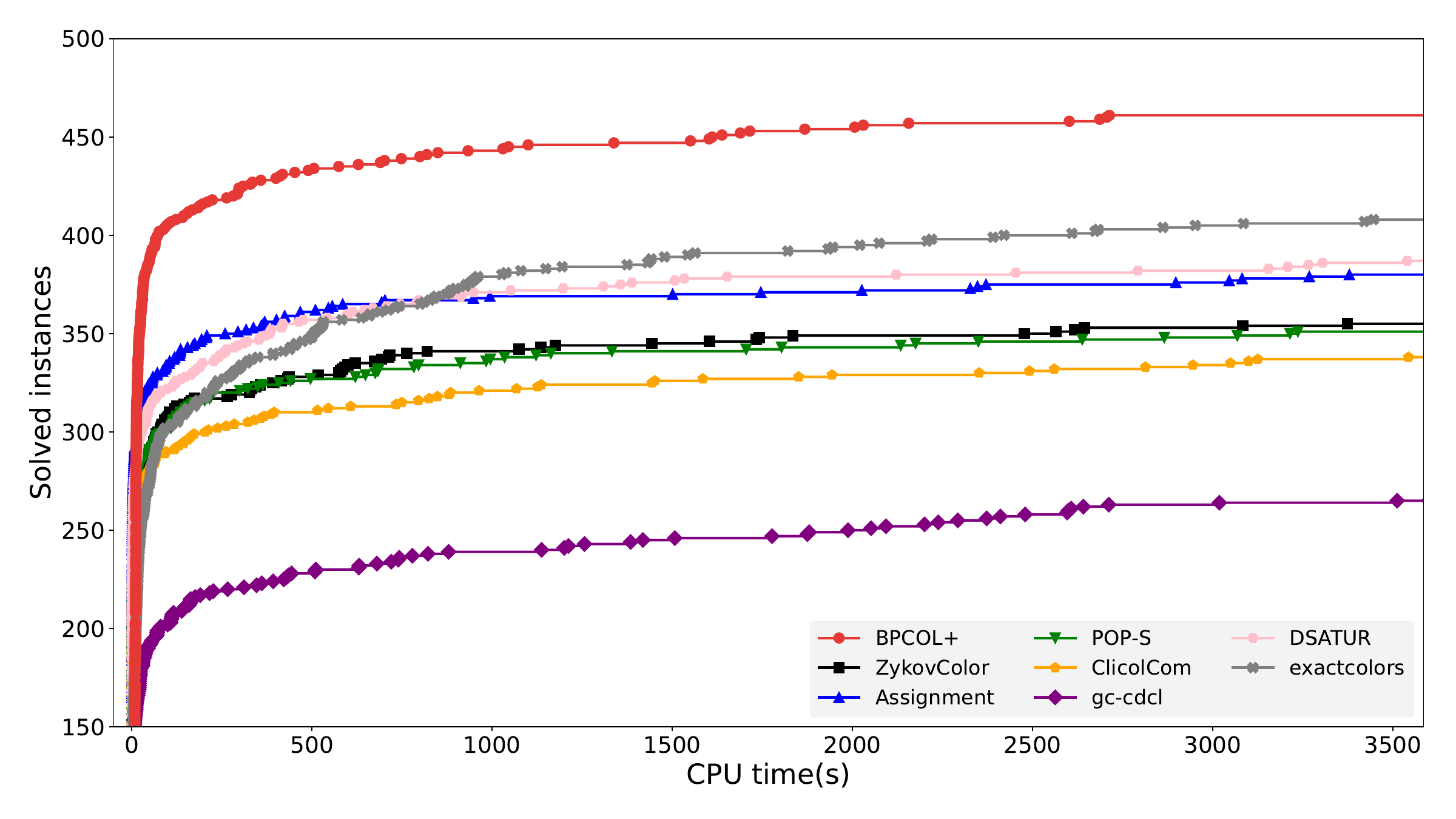}
\end{figure}

For a controlled comparison with publicly available exact algorithms, we use the previously selected subset of 500 Erd\H{o}s--R\'enyi instances. Table~\ref{tableForDensityClass} and Figure~\ref{fig:survivalPlotER} summarize the performance of \BPname\ and the seven publicly available exact algorithms on the selected  Erd\H{o}s--R\'enyi instances. Table~\ref{tableForDensityClass} reports aggregated results, grouped first by graph density (``\(\mu\)'') and then by the number of vertices (``\(|\mathcal{V}|\)''). The number of instances in each group is reported in column ``\#inst''. For each algorithm, column ``\#opt'' denotes the number of instances solved to proven optimality within the one-hour time limit, and column ``time'' denotes the average running time in seconds over the instances solved by that algorithm in the corresponding group. Figure~\ref{fig:survivalPlotER} complements the table by showing the cumulative number of solved instances over CPU time. 

Overall, \BPname\ achieves the best performance in terms of the total number of solved instances. It solves 461 out of 500 instances, substantially more than the closest competitors, namely \texttt{exactcolors} (408 solved instances) and \texttt{DSATUR} (387 solved instances). This advantage is clearly visible in Figure~\ref{fig:survivalPlotER}: the curve of \BPname\ rises sharply at the beginning and remains above all other curves throughout almost the entire time horizon, eventually reaching the highest plateau. This indicates that \BPname\ is not only robust in terms of the number of solved instances, but also competitive in terms of time-to-solution.

The aggregated results further show that the advantage of \BPname\ is most pronounced on larger and higher-density instances. In terms of graph density, several algorithms perform well on very sparse instances, such as \texttt{ER*.05} and \texttt{ER*.1}, where almost all methods solve nearly all instances. On these low-density groups, \BPname\ does not show a clear advantage. However, as density increases, \BPname\ becomes clearly more competitive. It solves the largest number of instances for densities \(30\%\), \(40\%\), \(50\%\), and \(70\%\), and ties for the best result at density \(90\%\). This suggests that the proposed method is particularly effective on denser random graphs, where several SAT-based and Dsatur-based methods exhibit a substantial drop in the number of solved instances.
A similar pattern is observed for the number of vertices. While all methods are relatively competitive on smaller graphs with 70 vertices, the gap widens as the graph size increases. For \(n=90\), \(n=100\), and \(n=110\), \BPname\ solves 97, 93, and 71 instances, respectively, achieving the best result in each of these groups. These results indicate that \BPname\ scales better than the other algorithms on larger Erd\H{o}s--R\'enyi instances, especially when combined with higher graph densities.

We also evaluated \BPname\ on the full set of 5,000 Erd\H{o}s--R\'enyi instances. Within the one-hour time limit, \BPname\ solves 4,641 instances to proven optimality, leaving 359 instances unsolved. The phase-by-phase progression of these instances is discussed in the next section and summarized in Table~\ref{tab:phase_summary}.

\subsection{Effectiveness of the Main Phases of \BPname}\label{subsec:effect_components}

This section summarizes the contribution of the main phases of \BPname. We focus the discussion on the DIMACS benchmark instances, for which we provide a detailed phase-by-phase analysis of preprocessing, heuristic initialization, root-node column generation, and ZDD-based branch-and-price. For each phase, we report the numbers of input instances, optimally solved instances, and remaining instances. Table~\ref{tab:phase_summary} reports the corresponding phase-level statistics for both the DIMACS and Erd\H{o}s--R\'enyi benchmark instances.
Additional experiments on the effectiveness of the main algorithmic components are reported in the e-companion; see \S\ref{subsec:component_effectiveness}.
Detailed results are available in the online repository.

\begin{itemize}
    \item In the preprocessing phase (\S\ref{subsec:preprocessing}), vertex reduction is substantial. Among the 137 DIMACS instances, 77 exhibit a decrease in the number of vertices. For these instances, the average reduction is nearly \(60\%\), and the maximum reduction reaches \(100\%\). In particular, more than half of the vertices are eliminated in 46 instances. The preprocessing phase is also very fast: in most cases, its running time essentially coincides with that of \texttt{CliSAT}. The latter computes a strong clique within the 1-second time limit on 123 instances, with an average running time of 0.19 seconds; on the remaining 14 very large instances, it reaches the time limit while still providing high-quality lower bounds. The greedy heuristic used to compute the Mycielski-based bound requires, on average, 0.02 seconds. Over the testbed, the clique bound is stronger on 87 instances, the Mycielski-based bound is stronger on 11 instances, and the two bounds coincide on 39 instances. Preprocessing alone certifies optimality in 22 instances, leaving 115 for later phases.
    
    \item The heuristic initialization phase (\S\ref{subsec:primal}) also proves effective on the remaining 115 DIMACS instances. In this phase, \BPname\ applies the first phase of the MMT heuristic to compute high-quality initial colorings and upper bounds. The heuristic takes about 8 seconds on average when the time limit is not reached, and exceeds it in 30 cases. It matches the best-known upper bound from the literature in 87 instances. In the remaining 28 instances, the obtained upper bound is worse than the best-known one; nevertheless, several of these instances are eventually solved to optimality by the subsequent phases of \BPname. By combining the heuristic upper bounds with the clique-based lower bounds obtained in preprocessing, 43 additional instances are solved to proven optimality before entering the main BP process. Consequently, 72 instances remain unsolved after preprocessing and heuristic initialization.
    
    \item Root-node column generation (\S\ref{subsec:safe_bounds} and \S\ref{subsec:lb_zdd}) is then applied to these 72 remaining DIMACS instances. It solves the root node and computes a valid lower bound within the one-hour time limit for 52 instances, with an average running time of about 90 seconds. Among these 52 instances, 15 are solved in less than 0.1 seconds, 36 under 10 seconds, and only 3 require more than 10 minutes. In 16 cases, the root-node lower bound is sufficient to certify optimality. The other 36 instances whose root nodes are solved but not proven optimal are passed to the ZDD construction and BP phases. In the remaining 20 cases, the root node is not solved within the time limit; the main difficulty stems from the graph's sparsity, which makes exact pricing particularly challenging. For the 52 instances whose root node is solved, about \(99\%\) of the generated columns are produced by the heuristic pricer, which accounts for roughly \(15\%\) of the total time; exact pricing accounts for about \(35\%\), and solving the RMP accounts for another \(45\%\). The number of generated columns ranges from a few to more than 10,000, with an average of about 1,000 per instance.
    
    \item After root-node column generation, ZDD construction (\S\ref{subsec:branching} and \S\ref{sec:ZDDconstr}) is attempted on the remaining 36 DIMACS instances that are not proven optimal at the root node. In terms of ZDD construction, a complete ZDD is successfully produced for 13 instances, and a reduced ZDD for 18 instances; in 5 cases, neither construction completes within the one-hour time limit. For reduced ZDDs, the average proportion of eliminated maximal stable sets is about \(45\%\), and the average construction time is about 60 seconds, whereas complete ZDD construction requires about 130 seconds on average. With ZDD-based pricing, the subsequent BP procedure solves 15 of these 36 instances, while the remaining 21 reach the overall time limit. Restarting CG on the reduced ZDD improves the corresponding fractional lower bound in 10 cases, and in 5 of them, optimality is established directly from the updated bound.
\end{itemize}

\begin{table}[t]
\centering
\footnotesize
\caption{Progress of \BPname\ across the main phases of the algorithm on the DIMACS and Erd\H{o}s--R\'enyi instances.}
\label{tab:phase_summary}
\scalebox{1.0}{
\begin{tabular}{lrrrrrr}
\toprule
\multirow{2}{*}{Phase}
& \multicolumn{3}{c}{DIMACS}
& \multicolumn{3}{c}{Erd\H{o}s--R\'enyi} \\
\cmidrule(lr){2-4} \cmidrule(lr){5-7}
& \#Input & \#Solved & \#Remaining
& \#Input & \#Solved & \#Remaining \\
\midrule
Preprocessing phase (\S\ref{subsec:preprocessing})
& 137 & 22 & 115
& 5,000 & 46 & 4,954 \\

Heuristic initialization phase (\S\ref{subsec:primal})
& 115 & 43 & 72
& 4,954 & 336 & 4,618 \\

Root-node CG phase (\S\ref{subsec:safe_bounds} and \S\ref{subsec:lb_zdd})
& 72 & 16 & 56
& 4,618 & 2,471 & 2,147 \\

ZDD construction and BP phase (\S\ref{subsec:branching} and \S\ref{sec:ZDDconstr})
& 36 & 15 & 21
& 2,147 & 1,788 & 359 \\

\midrule
Total
& 137 & 96 & 41
& 5,000 & 4,641 & 359 \\
\bottomrule
\end{tabular}
}

\vspace{2mm}
\parbox{0.95\linewidth}{
}
\end{table}

Table~\ref{tab:phase_summary} summarizes the aforementioned progress through the main phases of \BPname\ for the DIMACS instances, reporting the number of input, solved, and remaining instances at each stage. The same analysis is also provided for the 5,000 Erd\H{o}s--R\'enyi instances, for which preprocessing solves 46 instances and leaves 4,954 instances for heuristic initialization. The heuristic initialization phase solves another 336 instances, leaving 4,618 instances for root-node column generation. The root-node CG phase is particularly effective on this benchmark, solving 2,471 additional instances and reducing the number of instances not yet proven optimal to 2,147. Among these, 2,129 instances successfully complete ZDD construction and proceed to the BP phase, while 18 instances remain unsolved because the ZDD cannot be constructed within the time limit. The ZDD-based BP phase then solves 1,788 of the 2,129 processed instances, leaving 341 unsolved. Consequently, \BPname\ solves \(46+336+2{,}471+1{,}788=4{,}641\) out of the 5,000 Erd\H{o}s--R\'enyi instances to proven optimality within the one-hour time limit, while \(18+341=359\) instances remain unsolved.

\section{Conclusions}\label{sec:conclusions}

This paper addressed the exact solution of the GCP and proposed \BPname, a state-of-the-art BP algorithm enhanced with preprocessing, heuristic initialization, and ZDD-based pricing. The main contribution is the integration of reduced-cost fixing with ZDD construction in a unified exact framework. Instead of building a complete ZDD of all maximal stable sets, \BPname\ exploits reduced-cost information to safely eliminate redundant ones during the construction of a reduced ZDD. We further introduce a new dual formulation that generates alternative dual solutions, strengthening reduced-cost fixing. Overall, these components significantly reduce the pricing space before branching while preserving exactness.

The computational results confirm the effectiveness of \BPname\ on both the classical DIMACS benchmark set and the recently proposed Erd\H{o}s--R\'enyi instances. On the 137 DIMACS instances, \BPname\ solves 96 instances to proven optimality within the prescribed time limit, a result that is highly competitive among state-of-the-art exact algorithms with publicly available source code and slightly improves upon the latest \texttt{ZykovColor} solver, which solves 94 instances on the same benchmark set. The comparison with algorithms whose source code is not publicly available also shows that \BPname\ is competitive with state-of-the-art exact approaches reported in the literature, even though these methods were often tested under different time limits and on different subsets of instances. For example, on the subset considered by \citet{Morrison2016}, \BPname\ solves 26 instances in one hour, whereas their method solves 15 in ten hours. The results on Erd\H{o}s--R\'enyi random graphs provide an additional validation of the method: on the full set of 5,000 instances, \BPname\ solves 4,641 instances within one hour; and in the controlled comparison with publicly available exact algorithms on the selected 500-instance subset, \BPname\ solves 461 instances, outperforming the closest competing methods, including \texttt{exactcolors} with 408 solved instances and \texttt{DSATUR} with 387 solved instances.
For the instances where both the complete ZDD and the reduced ZDD can be constructed, the reduced-cost-fixing strategy reduces the number of maximal stable sets by about \(50\%\) on average, with a maximum reduction exceeding \(99\%\). In aggregate, over these instances, the total number of maximal stable sets is reduced by about \(82\%\).
These results show that \BPname\ is robust and competitive across both DIMACS and randomly generated instances, especially on larger, denser graphs.

Beyond graph coloring, the proposed ZDD reduction mechanism may be useful for other optimization problems in which ZDDs represent large families of feasible structures, such as scheduling, packing, and related enumeration or pricing problems. In these settings, using dual information to reduce the represented solution space may improve the scalability of ZDD-based exact methods.

Several directions remain for future research. The ZDD-based pricing procedure could be strengthened by generating multiple useful columns in a single pricing call, thereby reducing the number of column-generation iterations. Another important direction is to accelerate the construction of reduced ZDDs, which can still be costly on large instances. Finally, the constructed ZDDs could be exploited more directly in mathematical programming formulations for the GCP and adapted to related coloring variants, such as weighted graph coloring, partition coloring, and other constrained coloring problems.

\bibliographystyle{informs2014}
\bibliography{myref}


%
%
%

\ECSwitch



\ECHead{E-Companion to the Paper Titled ``Advancing Branch-and-Price for Graph Coloring: New Pricing Strategies and Benchmark Results''}

\section{Proof of Statements} \label{sec:EC-Proof}

This section provides proofs of the statements reported in the main paper.

\subsection{Proof of Lemma \ref{lemma:sc_fix}}\label{sec:EC-ProofProp1}

The result can be established using arguments analogous to those in \citet{Agarwal1989}, \citet{baldacci2002new}, and \citet{Baldacci2008}. For completeness, we present a corresponding proof below.
\begin{proof}{Proof.}
Let \(\boldsymbol\xi\) be a feasible integer solution of~\eqref{SC} with value
\(z:=\sum_{\mathcal S\in\mathscr S}\xi_{\mathcal S}\le \tau\). By the definition of reduced cost in~\eqref{RC},
\begin{equation}
\sum_{\mathcal S\in\mathscr S} rc(\mathcal S,\boldsymbol\pi)\xi_{\mathcal S}
=
\sum_{\mathcal S\in\mathscr S}\xi_{\mathcal S}
-
\sum_{v\in\mathcal V}\pi_v
\sum_{\mathcal S\in\mathscr S(v)}\xi_{\mathcal S}.
\label{eq:scfix_sumrc}
\end{equation}
Since \(\boldsymbol\xi\) is feasible for~\eqref{SC}, we have
\(\sum_{\mathcal S\in\mathscr S(v)}\xi_{\mathcal S}\ge 1\) for every \(v\in\mathcal V\). Together with \(\boldsymbol\pi\ge\boldsymbol 0\), this implies
\[
\sum_{v\in\mathcal V}\pi_v
\sum_{\mathcal S\in\mathscr S(v)}\xi_{\mathcal S}
\ge
\sum_{v\in\mathcal V}\pi_v .
\]
Since \(z=\sum_{\mathcal S\in\mathscr S}\xi_{\mathcal S}\), \eqref{eq:scfix_sumrc} gives
\[
z
=
\sum_{\mathcal S\in\mathscr S}rc(\mathcal S,\boldsymbol\pi)\xi_{\mathcal S}
+
\sum_{v\in\mathcal V}\pi_v
\sum_{\mathcal S\in\mathscr S(v)}\xi_{\mathcal S}
\ge
\sum_{\mathcal S\in\mathscr S}rc(\mathcal S,\boldsymbol\pi)\xi_{\mathcal S}
+
\sum_{v\in\mathcal V}\pi_v .
\]
If \(\xi_{\bar{\mathcal S}}=1\), dual feasibility gives
\(rc(\mathcal S,\boldsymbol\pi)\ge0\) for every \(\mathcal S\in\mathscr S\), and hence
\[
z\ge rc(\bar{\mathcal S},\boldsymbol\pi)+\sum_{v\in\mathcal V}\pi_v>\tau,
\]
contradicting \(z\le \tau\).\Halmos
\end{proof}

\subsection{Proof of Lemma \ref{lemma:infeasible_fix}}
\label{sec:EC-ProofProp2}

\begin{proof}{Proof.}
Let \(\boldsymbol\xi\) be a feasible integer solution of~\eqref{SC} with value
\(z:=\sum_{\mathcal S\in\mathscr S}\xi_{\mathcal S}\le \tau\), and suppose by contradiction
that \(\xi_{\bar{\mathcal S}}=1\). As for the proof of Lemma~\ref{lemma:sc_fix}, we have
\[
z\ge
\sum_{\mathcal S\in\mathscr S}rc(\mathcal S,\boldsymbol\pi)\xi_{\mathcal S}
+\sum_{v\in\mathcal V}\pi_v .
\]
Since \(rc(\mathcal S,\boldsymbol\pi)\ge rc^*(\boldsymbol\pi)\) for every
\(\mathcal S\in\mathscr S\), we obtain
\[
z\ge
rc(\bar{\mathcal S},\boldsymbol\pi)+(z-1)rc^*(\boldsymbol\pi)
+\sum_{v\in\mathcal V}\pi_v .
\]
Because \(z\le \tau\) and \(rc^*(\boldsymbol\pi)<0\),
\((z-1)rc^*(\boldsymbol\pi)\ge (\tau-1)rc^*(\boldsymbol\pi)\). Hence
\[
z\ge
rc(\bar{\mathcal S},\boldsymbol\pi)+(\tau-1)rc^*(\boldsymbol\pi)
+\sum_{v\in\mathcal V}\pi_v>\tau,
\]
where the last inequality follows from
\eqref{eq:infeasible_fix_condition}. This contradicts \(z\le \tau\). \Halmos
\end{proof}

\subsection{Proof of Theorem~\ref{thm:pruning_rule}}
\label{sec:EC-ProofProp3}

\begin{proof}{Proof}
Let \(\hat{\mathcal S}\in\mathscr S\) be a maximal stable set generated below the current branch. Then
	\(\bar{\mathcal S}\subseteq\hat{\mathcal S}\subseteq\bar{\mathcal S}\cup\mathcal C\).
    Since \(\boldsymbol\pi_h\in\mathbb R^{|\mathcal V|}_{\ge0}\), condition~\eqref{eq:pruning_condition} implies
	\[
	rc(\hat{\mathcal S},\boldsymbol\pi_h)
	=1-\sum_{v\in\hat{\mathcal S}}\pi_{h,v}
	\ge
	1-\sum_{v\in\bar{\mathcal S}\cup\mathcal C}\pi_{h,v}
	>\delta_h .
	\]
	By the definition of \(\delta_h\) in~\eqref{eq:delta_def}, Lemma~\ref{lemma:sc_fix} applies if
	\(\boldsymbol\pi_h\in\bm\Pi_F\), and Lemma~\ref{lemma:infeasible_fix}
	applies if \(\boldsymbol\pi_h\in\bm\Pi_I\). Thus no feasible integer solution of~\eqref{SC} with objective value at most \(\tau\) can satisfy \(\xi_{\hat{\mathcal S}}=1\).
\Halmos
\end{proof} 

\subsection{Proof of Corollary~\ref{cor:reduced_zdd_correctness}}
\label{sec:EC-ProofCor1}

\begin{proof}{Proof.}
Consider first the recursive ZDD construction without reduced-cost pruning. By the correctness of the standard maximal-stable-set ZDD construction, the recursion generates exactly the root-to-\textsc{True} paths associated with maximal stable sets in \(\mathscr S\).

We now show that the additional reduced-cost tests remove exactly the maximal stable sets that do not belong to \(\mathscr S_{\rm red}\). At a terminal call, the current path corresponds to a maximal stable set \(\mathcal S\). The algorithm rejects this path if and only if there exists \(h\in\{1,2,\ldots,q\}\) such that
\[
rc(\mathcal S,\boldsymbol\pi_h)>\delta_h.
\]
Equivalently, the terminal test accepts exactly those complete paths satisfying
\[
rc(\mathcal S,\boldsymbol\pi_h)\le \delta_h,
\qquad h\in\{1,2,\ldots,q\},
\]
which are precisely the inequalities defining \(\mathscr S_{\rm red}\).

It remains to verify that early pruning cannot remove any element of \(\mathscr S_{\rm red}\). Suppose that a subtree is pruned by the test of Theorem~\ref{thm:pruning_rule}. Then, for every maximal stable set \(\hat{\mathcal S}\) represented by a completion of that subtree, there exists \(h\in\{1,2,\ldots,q\}\) such that
\[
rc(\hat{\mathcal S},\boldsymbol\pi_h)>\delta_h.
\]
Thus no such \(\hat{\mathcal S}\) belongs to \(\mathscr S_{\rm red}\). Conversely, if \(\mathcal S\in\mathscr S_{\rm red}\), then along the unique recursive path representing \(\mathcal S\), no reduced-cost pruning test can be satisfied; otherwise Theorem~\ref{thm:pruning_rule} would imply that \(\mathcal S\notin\mathscr S_{\rm red}\), a contradiction.

Finally, the node-merging and zero-suppression operations are standard ZDD reductions and preserve the represented family of sets. Therefore, the returned ZDD encodes exactly the family \(\mathscr S_{\rm red}\) defined in~\eqref{eq:sred_def}. \Halmos
\end{proof}

\section{Illustration of Complete and Reduced ZDD Construction}
\label{sec:app_zdd_illustration}

In this section, we present step-by-step examples of the complete and reduced ZDDs on a small, representative graph. These examples demonstrate how the recursive branching procedure builds the complete ZDD, and how the reduced-cost pruning selectively discards branches to yield a more compact reduced ZDD.

For the example of graph in Figure~\ref{fig:comparison_zdd}(a), Figure~\ref{fig:six-subfigs} illustrates the key steps for constructing the complete ZDD that encodes all maximal stable sets $\mathscr{S}$. Here, the notation $[n]$ denotes the set $\{1,2,\dots,n\}$ and a set listed in braces $\{\cdot\}$ indicates the uncovered set $\mathcal{U}$ for the corresponding recursive call in the $MakeIndSetZDD$ procedure of~\citet{Morrison2016}.

As shown in Figure~\ref{fig2:sub-1}, the construction starts from the root node associated with $v_1$, with the initial uncovered set $\mathcal{U}=[6]$. We denote by $\mathcal{R}$ the partial stable set induced by the vertices selected along the current root-to-node path. When the search takes the root's high branch (solid arc), vertex $v_1$ is added to $\mathcal{R}$, and $v_1$ together with its neighbors $(v_2,v_4,v_6)$ is removed from $\mathcal{U}$, yielding $\mathcal{U}=\{3,5\}$; the algorithm then proceeds to construct a node for $v_3$. If $v_3$ is selected, it is added to $\mathcal{R}$, and since $v_5$ is not adjacent to $v_3$, the uncovered set becomes $\mathcal{U}=\{5\}$. Selecting $v_5$ next yields $\mathcal{U}=\emptyset$, meaning that $\mathcal{R}=\{v_1,v_3,v_5\}$ is a maximal stable set; therefore, the high branch of $v_5$ is connected to the \textsc{True} node. Conversely, if $v_5$ is not selected, maximality cannot be satisfied, and the low branch of $v_5$ is connected to the \textsc{False} node. This completes the exploration of the high branch of $v_3$. We now backtrack and consider the low branch of $v_3$, as illustrated in Figure~\ref{fig2:sub-2}. At this point, $v_3$ is removed from the current partial stable set $\mathcal{R}$, so the algorithm restores $\mathcal{R}$ to its state before selecting $v_3$. If $v_3$ is not selected, the only selectable uncovered vertex in $\mathcal{U}$ is $v_5$, which is not adjacent to $v_3$. Therefore, this branch cannot lead to a maximal stable set, and the low branch of $v_3$ is pruned and connected to the \textsc{False} node.

When the low branch (dashed arc) is taken from the root, $v_1$ is excluded from the current branch. The algorithm then backtracks to the state before selecting $v_1$, so the partial stable set is set to $\mathcal{R}=\emptyset$, and proceeds to the next vertex in the ordering, namely $v_2$. Figures~\ref{fig2:sub-2}--\ref{fig2:sub-4} illustrate the subsequent construction steps for the branches associated with $S_2=\{v_2,v_4,v_6\}$ and $S_3=\{v_2,v_5\}$, which are obtained through different high/low decisions after excluding $v_1$. Along the branch generating $S_3$, the node to be created, denoted by $a'$, is equivalent to the existing node $a$. Hence, $a'$ is merged with $a$, reducing the size of the ZDD while preserving its representation. Continuing the recursive exploration over all possible branches produces the complete ZDD shown in Figure~\ref{fig2:sub-9}, which encodes the four maximal stable sets $S_1=\{v_1,v_3,v_5\}$, $S_2=\{v_2,v_4,v_6\}$, $S_3=\{v_2,v_5\}$, and $S_4=\{v_3,v_6\}$.

\begin{figure}[htbp]
  \centering
  \tikzset{
    >=stealth,
    one arc/.style={->},
    zero arc/.style={->, dashed, dash pattern=on 1.8pt off 1.6pt}
  }
\begin{adjustbox}{max totalsize={\textwidth}{0.99\textheight},center}
    \begin{minipage}{\textwidth}
	\begin{subfigure}[t]{0.33\textwidth}
		\centering

		\caption{}
		\label{fig2:sub-1}
	\end{subfigure}\hfill
\begin{subfigure}[t]{0.33\textwidth}
	\centering
	%
	\caption{}
	\label{fig2:sub-2}
\end{subfigure}\hfill
\begin{subfigure}[t]{0.33\textwidth}
	\centering
	%
	\caption{}
	\label{fig2:sub-3}
\end{subfigure}\hfill
\begin{subfigure}[t]{0.33\textwidth}
	\centering
	%
		\caption{}
		\label{fig2:sub-9}
	\end{subfigure}
    \end{minipage}
  \end{adjustbox}
  \caption{Example of construction of the complete ZDD for the graph of Figure \ref{fig:comparison_zdd}.}
  \label{fig:six-subfigs}
\end{figure}

\begin{figure*}[htbp]
    \centering
    \tikzset{
        >=stealth,
        one arc/.style={->},
        zero arc/.style={->, dashed, dash pattern=on 1.8pt off 1.6pt}
    }
	\begin{adjustbox}{max totalsize={\textwidth}{0.95\textheight},center}
	\begin{minipage}{\textwidth}
	\centering\setlength{\parindent}{0pt}
	\begin{subfigure}[t]{0.325\textwidth}
		\centering

		\caption{}
		\label{fig:sub-1}
	\end{subfigure}\hfill%
	\begin{subfigure}[t]{0.325\textwidth}
		\centering
		%
		\caption{}
		\label{fig:sub-2}
	\end{subfigure}\hfill%
	\begin{subfigure}[t]{0.325\textwidth}
		\centering
		%
		\caption{}
		\label{fig:sub-3}
	\end{subfigure}\par\medskip
	\begin{subfigure}[t]{0.325\textwidth}
		\centering
		%
		\caption{}
		\label{fig:sub-4}
	\end{subfigure}\hfill%
	\begin{subfigure}[t]{0.325\textwidth}
		\centering
		%
	\caption{}
	\label{fig:sub-6}
	\end{subfigure}\par\medskip
	\begin{subfigure}[t]{0.325\textwidth}
		\centering
		%
        
	\caption{}
	\label{fig:sub-9}
\end{subfigure}
    \end{minipage}
  \end{adjustbox}
	\caption{Example of construction of the reduced ZDD for the graph of Figure \ref{fig:comparison_zdd}, using the feasible dual vector $\boldsymbol{\pi}=(1,0,0,1,0,0)$.}
	\label{fig:reducedZDD}
\end{figure*}

Figure~\ref{fig:reducedZDD} illustrates the key steps for constructing the reduced ZDD under \(UB=3\) and the feasible dual vector \(\boldsymbol{\pi}=(1,0,0,1,0,0)\). Although the graph in Figure~\ref{fig:comparison_zdd}(a) has chromatic number \(\chi(\mathcal{G})=2\), we set \(UB=3\) in this illustrative example so that the reduced ZDD is constructed with respect to the target value \(\tau = UB-1=2\). In this way, the reduction aims to preserve the maximal stable sets that may appear in an optimal coloring of cost \(2\), while pruning only those branches that cannot contribute to any solution that improves the incumbent upper bound.

We use the same notation as in Figure~\ref{fig:six-subfigs}: \([n]\) denotes \(\{1,2,\dots,n\}\), and the set in braces represents the uncovered set \(\mathcal{U}\) of the corresponding recursive call. In addition, the two values shown in parentheses \((x,y)\) have the following meaning. 
The first value \(x\) is the cumulative reduced cost of the current partial path, computed from the vertices selected by high-branch decisions up to the current node. The second value \(y\) is a valid lower bound on the reduced cost of any maximal stable set that can still be generated from the corresponding branch. Thus, if \(y\) exceeds the fixing threshold, the whole branch can be safely pruned. This lower bound is computed according to the pruning test described in Algorithm~\ref{makeIS} and Theorem~\ref{thm:pruning_rule}. The detailed explanation of each figure follows.

\begin{enumerate}[{\ref{fig:reducedZDD}}(a){:}]
    \item Along the $v_1$-associated branch, the reduced cost remains below $1$, hence the path is directed to the \textsc{True} node. When the search takes the root's high branch, the uncovered set is $U=\{v_3,v_5\}$, and Theorem~\ref{thm:pruning_rule} yields a branch lower bound of $0$, so no pruning is applied. Along the path $S_1=\{v_1,v_3,v_5\}$, upon reaching node $a$ and examining its high branch, we obtain the cumulative reduced cost of this path $RC_{hi(a)}=0$, which does not exceed the fixing threshold \(\delta=0\). Hence, $S_1$ is retained in the reduced ZDD.
    
    \item When \(v_2\) is selected, the pruning test does not apply, and the recursion proceeds to the next branching vertex, \(v_4\). 
    
    \item Selecting \(v_4\) then forces the selection of \(v_6\), since excluding \(v_6\) would violate maximality.

    \item If \(v_4\) is not selected, the reduced-cost lower bound of the corresponding branch triggers pruning, and the node associated with \(v_5\) is linked to the \textsc{False} node. Consequently, the path corresponding to \(S_3=\{v_2,v_5\}\) is not inserted into the reduced ZDD.
    

    \item By the ZDD property, the node associated with \(v_5\) is eliminated, and the low arc of the node associated with \(v_4\) is connected directly to the \textsc{False} node.

    \item On the branch associated with \(v_3\), selecting \(v_3\) yields a reduced-cost lower bound equal to \(1\), which triggers pruning. Therefore, this branch is rejected.

    \item Since the path associated with \(v_3\) satisfies the pruning condition, it is connected to the \textsc{False} node. Consequently, the path corresponding to \(S_4=\{v_3,v_6\}\) is pruned.

    \item When branching on \(v_4\), note that \(v_4\), \(v_5\), and \(v_6\) are all non-neighbors of \(v_2\). Hence, any stable set generated along this branch could still be augmented by \(v_2\), and therefore cannot be maximal. Consequently, the branch is rejected.

    \item  Nodes whose high arc is connected to the \textsc{False} node are eliminated. As a result, the node associated with \(v_2\) is also connected to the \textsc{False} node. All branches have been processed. The algorithm terminates, yielding the reduced ZDD shown in Figure~\ref{fig:sub-9}.
\end{enumerate}

To summarize, the complete ZDD in Figure~\ref{fig2:sub-9} contains 7 decision nodes and 2 terminal nodes, together with 14 arcs, for a total size of 23. It encodes all four maximal stable sets for the example graph in Figure~\ref{fig:comparison_zdd}(a):
\[
S_1=\{v_1,v_3,v_5\},\quad
S_2=\{v_2,v_4,v_6\},\quad
S_3=\{v_2,v_5\},\quad
S_4=\{v_3,v_6\}.
\]
In contrast, the reduced ZDD in Figure~\ref{fig:sub-9} contains six decision nodes and two terminal nodes, together with twelve arcs, for a total size of 20. It retains only two maximal stable sets:
\[
S_1=\{v_1,v_3,v_5\}
\quad\text{and}\quad
S_2=\{v_2,v_4,v_6\},
\]
whereas the paths corresponding to \(S_3\) and \(S_4\) are pruned by the reduced-cost fixing techniques. The two retained maximal stable sets exactly form an optimal coloring of the graph, since they cover all vertices using two colors.
From a computational perspective, the ZDD-based pricing procedure can be implemented by dynamic programming over the ZDD and has complexity linear in the size of the diagram, namely \(O(|\mathcal{N}|+|\mathcal{A}|)\), where \(\mathcal{N}\) and \(\mathcal{A}\) denote the sets of ZDD nodes and arcs, respectively. In this example, the complete ZDD requires \(O(9+14)=O(23)\), whereas the reduced ZDD requires \(O(8+12)=O(20)\). This small example, therefore, illustrates how reduced-cost fixing decreases the size of the pricing structure while preserving the maximal stable sets required for optimality.

\section{Additional Details on the Computational Results}
\label{sec:additional_results}

This section presents additional details about the computational evaluation of \BPname. We first report details on the classical DIMACS benchmark instances (\S\ref{sec:DIMACS}). We then provide a detailed comparison with state-of-the-art exact algorithms for which source code is available, highlighting the relative performance of \BPname\ (\S\ref{subsec:detailed_comparison_with_code}). Finally, we analyze the effectiveness of the algorithm's main components to assess their individual contributions to overall performance (\S\ref{subsec:component_effectiveness}).

\subsection{DIMACS Instances}\label{sec:DIMACS}

The DIMACS instances can be divided into 22 different families. For example, the \texttt{DSJC} class consists of random graphs with varying densities, which pose severe challenges for exact algorithms because of their extremely high edge density. The \texttt{myciel} class is constructed with special structures. Although these graphs have relatively few edges, they hide high chromatic numbers, making them particularly challenging. Other categories include structured graphs, such as the \texttt{queen} instances, and real-world problem-based instances, such as the \texttt{fpsol2} class, each presenting unique computational difficulties. The number of instances per family ranges from 2, for the \texttt{school} family, to 14, for the \texttt{FullIns} family, with an average of approximately 6.2. Three instances, \texttt{games120}, \texttt{latin\_square\_10}, and \texttt{will199GPIA}, which do not belong to any larger group, are collected under the family labeled \texttt{others}. Instances with proper names, namely \texttt{anna}, \texttt{david}, \texttt{homer}, \texttt{huck}, and \texttt{jean}, are grouped into the family labeled \texttt{SocialNet}, as they are derived from social settings.

The computational results presented in~\cite{van2022graph} summarize the state-of-the-art upper and lower bounds for the DIMACS instances available at the time. In this work, these values are further updated by incorporating results from more recent exact and heuristic algorithms for the GCP. The resulting bounds, therefore, reflect the best-known values currently available from the literature. Based on these results, Table~\ref{tab:families} reports, for each instance family, the number of instances solved to optimality in the literature (column ``\#solved''). These are instances for which the current lower and upper bounds on the chromatic number $\chi(\mathcal{G})$ coincide.
It is worth noting that the best-known lower and upper bounds often stem from different algorithmic approaches. Moreover, neither the computing times required to obtain them nor the specific algorithms used are reported in~\cite{van2022graph}.  
As a result, it is difficult to directly compare the performance of any new exact or heuristic algorithm against these results.  
Nevertheless, we include them to indicate which instances have been solved to optimality in the literature and to provide a broader context for evaluating the performance of \BPname\ (see \cite{van2022graph} for further details).

As shown in Table~\ref{tab:families}, several families contain only solved instances. These include \texttt{DSJR}, \texttt{FullIns}, \texttt{GPIA}, \texttt{SocialNet}, \texttt{fpsol}, \texttt{inithx}, \texttt{le}, \texttt{miles}, \texttt{mug}, \texttt{mulsol}, \texttt{myciel}, \texttt{qg.order}, \texttt{school}, and \texttt{zeroin}, where all instances have been solved to optimality. These families generally consist of instances with moderate size and structure that are well-suited to existing exact algorithms.
On the other hand, some families remain particularly challenging because they contain a significant number of unresolved instances. In particular, the \texttt{C} family remains completely unsolved. Other difficult families include \texttt{DSJC}, with only 4 out of 12 instances solved; \texttt{Insertions} (9 out of 11); \texttt{flat} (5 out of 6); \texttt{queen} (12 out of 13); \texttt{r} (8 out of 9); \texttt{wap} (5 out of 8); and \texttt{others} (2 out of 3). These families tend to include larger and denser graphs, which make them more challenging for current exact methods.

The last six columns of Table~\ref{tab:families} report, for each family, the minimum and maximum number of vertices $|\mathcal{V}|$, number of edges $|\mathcal{E}|$, and edge density values $\mu$, expressed as percentages. 
The smallest instance has 11 vertices, while the largest has 10{,}000; the number of edges ranges from 20 to over four million. The edge densities vary considerably across the benchmark: some families, such as \texttt{FullIns}, \texttt{GPIA}, \texttt{Insertions}, and \texttt{SocialNet}, include very sparse instances with minimum densities around or below 1\%, while others, like \texttt{C}, \texttt{DSJC}, \texttt{DSJR}, and \texttt{r}, contain highly connected graphs with densities of 90\% or higher. This wide range of densities illustrates the structural diversity of the DIMACS instances and highlights the varying difficulty these instances pose for exact GCP algorithms.

\begin{table}[] 
  \centering
    \footnotesize
  \caption{Main features of the 22 families in the 137 DIMACS instances.}
   \scalebox{1.0}{
    \begin{tabular}{lrrrrrrrrrrr}
    \toprule
&&&& \multicolumn{2}{c}{$|\mathcal{V}|$} && \multicolumn{2}{c}{$|\mathcal{E}|$} && \multicolumn{2}{c}{$\mu$ ($\%$)}\\
 \cmidrule(r){5-6} \cmidrule(r){8-9} \cmidrule(r){11-12}
    Family & \#inst & \#solved && $\min$ & $\max$ && $\min$ & $\max$ && $\min$  & $\max$  \\
\cmidrule(r){1-3} \cmidrule(r){5-6} \cmidrule(r){8-9} \cmidrule(r){11-12}
        \texttt{C} 	&	3	&	0	&	&	2,000	&	4,000	&	&	999,836	&	4,000,268	&	&	50.0	&	90.0	\\
    \texttt{DSJC} 	&	12	&	4	&	&	125	&	1,000	&	&	736	&	449,449	&	&	9.5	&	90.1	\\
    \texttt{DSJR} 	&	3	&	3	&	&	500	&	500	&	&	3,555	&	121,275	&	&	2.8	&	97.2	\\
    \texttt{FullIns} 	&	14	&	14	&	&	30	&	4,146	&	&	100	&	77,305	&	&	0.9	&	23.0	\\
    \texttt{GPIA} 	&	4	&	4	&	&	662	&	1,916	&	&	4,185	&	65,390	&	&	0.7	&	5.4	\\
    \texttt{Insertions} 	&	11	&	9	&	&	37	&	1,406	&	&	72	&	9,695	&	&	1.0	&	10.8	\\
    \texttt{SocialNet} 	&	5	&	5	&	&	74	&	561	&	&	254	&	1,628	&	&	1.0	&	11.1	\\
    \texttt{flat} 	&	6	&	5	&	&	300	&	1,000	&	&	21,375	&	246,708	&	&	47.7	&	49.4	\\
    \texttt{fpsol} 	&	3	&	3	&	&	425	&	496	&	&	8,688	&	11,654	&	&	8.6	&	9.6	\\
    \texttt{inithx} 	&	3	&	3	&	&	621	&	864	&	&	13,969	&	18,707	&	&	5.0	&	7.3	\\
    \texttt{le} 	&	12	&	12	&	&	450	&	450	&	&	5,714	&	17,425	&	&	5.7	&	17.2	\\
    \texttt{miles} 	&	5	&	5	&	&	128	&	128	&	&	387	&	5,198	&	&	4.8	&	64.0	\\
    \texttt{mug} 	&	4	&	4	&	&	88	&	100	&	&	146	&	166	&	&	3.4	&	3.8	\\
    \texttt{mulsol} 	&	5	&	5	&	&	184	&	197	&	&	3,885	&	3,973	&	&	20.3	&	23.3	\\
    \texttt{myciel} 	&	5	&	5	&	&	11	&	191	&	&	20	&	2,360	&	&	13.0	&	36.4	\\
    \texttt{others} 	&	3	&	2	&	&	120	&	900	&	&	638	&	307,350	&	&	2.9	&	76.0	\\
    \texttt{qg.order} 	&	4	&	4	&	&	900	&	10,000	&	&	26,100	&	990,000	&	&	2.0	&	6.5	\\
    \texttt{queen} 	&	13	&	12	&	&	25	&	256	&	&	160	&	6,320	&	&	19.4	&	53.3	\\
    \texttt{r} 	&	9	&	8	&	&	125	&	1,000	&	&	209	&	485,090	&	&	2.7	&	97.1	\\
    \texttt{school} 	&	2	&	2	&	&	352	&	385	&	&	14,612	&	19,095	&	&	23.7	&	25.8	\\
    \texttt{wap} 	&	8	&	5	&	&	905	&	5,231	&	&	43,081	&	294,902	&	&	2.2	&	10.5	\\
    \texttt{zeroin} 	&	3	&	3	&	&	206	&	211	&	&	3,540	&	4,100	&	&	16.0	&	18.5	\\
     
    \midrule
    total/$\min$/$\max$ & 137   & 117   && 11    & 10,000 && 20     & 4,000,268 && 0.7 & 97.2   \\
    \bottomrule
    \end{tabular}
    }
  \label{tab:families}
\end{table}

\subsection{Detailed Comparative Results with Exact Algorithms with Available Source Code} \label{subsec:detailed_comparison_with_code}

Table~\ref{tab:tableForInstance} reports detailed comparative results between \BPname\ and the exact algorithms with publicly available source code on the full set of 137 DIMACS instances. These include the BP algorithm of~\citet{Held2012}, called \texttt{exactcolors}, and the DSatur-based algorithm of~\citet{san2012new}, called \texttt{DSATUR}. The remaining five are SAT-based algorithms: \texttt{gc-cdcl} by~\citet{hebrard2020constraint}, \texttt{CliColCom} by~\citet{heule2022cliques}, \texttt{POP-S} by~\citet{faber2024sat}, \texttt{Assignment} and \texttt{ZykovColor} by~\citet{brand2026customized}. The first six columns present the instance name (Instance), the family of the instance (Family), the number of vertices ($|\mathcal{V}|$), the number of edges ($|\mathcal{E}|$), the best known lower bound ($LB$), and the best known upper bound ($UB$) on the chromatic number. These bounds are initialized from the values reported by \citet{van2022graph} and subsequently updated with more recent results from the literature and improvements from our computational experiments. The lower bound updated by \BPname\ is highlighted in bold, for instance \texttt{C2000.9}. We improved the lower bound of instance \texttt{C2000.9} and successfully closed the previously open instance \texttt{r1000.1c} by proving its optimality. The computing time of the fastest algorithm for each instance is also shown in bold.
Columns 7--15 report the running times of \BPname, \BPUB, and seven reference algorithms, all evaluated with a time limit of 3600 seconds. To ensure a fair comparison, all algorithms with publicly available source code were executed on the same machine under the computational environment described in Section~\ref{sec:compenv}.

\begin{landscape}
\scriptsize
\setlength{\tabcolsep}{2pt}
\renewcommand{\arraystretch}{0.93}

\end{landscape}

Running times below or equal to 0.01 seconds are reported as ``$\le0.01$'' to avoid comparisons of very small values and because our focus is on evaluating performance on the harder instances.
If an algorithm fails to solve an instance within the time limit, the entry is marked as ``t.l.'' (time limit).

The results reported in Table~\ref{tab:tableForInstance} demonstrate the competitiveness of our approach. Overall, \BPname\ solves 96 instances, outperforming all competing algorithms with publicly available source code. In terms of computational efficiency, \BPname\ achieves the fastest running time on 73 instances among the seven reference algorithms considered in the comparison.
When compared with \texttt{exactcolors}, the only other BP algorithm in this set, \BPname\ demonstrates markedly superior performance: it solves 31 additional instances. Among the 65 instances solved by \texttt{exactcolors}, our method achieves faster runtimes on 45 cases, while matching its performance on the remaining 5. 

These findings confirm that \BPname\ provides substantial runtime advantages, even against algorithms employing the same exact framework.

\subsection{Effectiveness of the Main Components of \BPname} \label{subsec:component_effectiveness}

This section presents an analysis of the effectiveness of the main \BPname\ components (\S\ref{subsec:efficiency}) and of the ZDD reduction strategies (\S\ref{subsubsec:effect_zdd_reduction}).

\subsubsection{Key Efficiency Drivers of \BPname.}\label{subsec:efficiency}
To assess the contribution of each component, we evaluate the following \BPname\ variants on the DIMACS instances, each obtained by modifying or disabling a single algorithmic component:
\begin{itemize}
    \item $\texttt{RootZDDPric}$: ZDD-based pricing at the root node only (\S\ref{subsec:lb_zdd}).
    \item $\texttt{NoHeurPric}$: Removes the heuristic pricing algorithm at the root node (\S\ref{subsec:lb_zdd}).
    \item $\texttt{CompleteZDD}$: Disables all reduced-cost fixing strategies (\S\ref{sec:prica-ZDD-enh}).
\end{itemize}

\begin{figure}[t]
\caption{Number of instances solved as a function of computing time: comparison of three variant algorithms with \BPname\ for the GCP.}
\label{fig:survivalPlotVariant}
\center
\includegraphics[width=0.95\columnwidth]{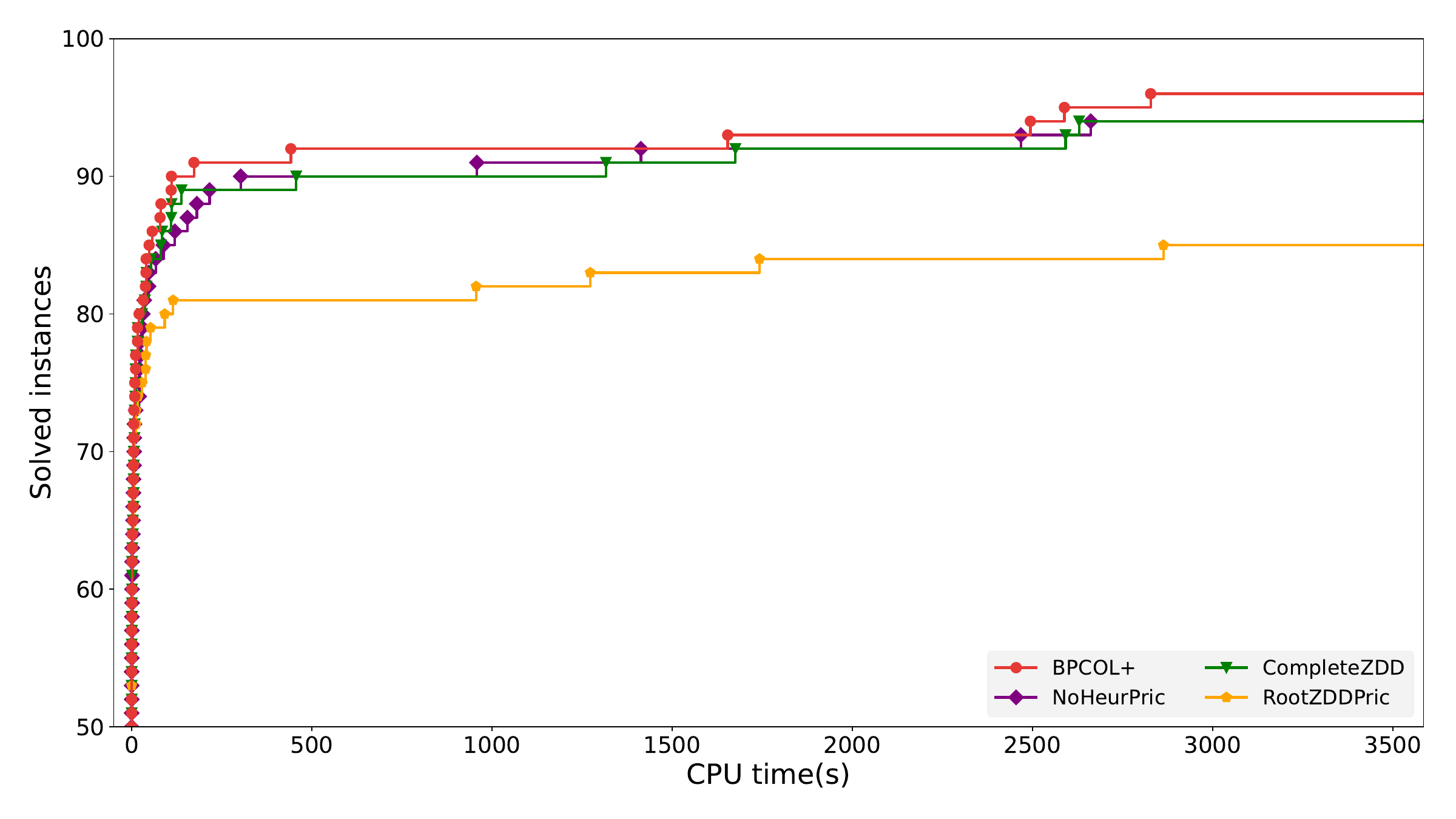}
\end{figure}

Figure~\ref{fig:survivalPlotVariant} reports the number of instances solved as a function of computing time for \BPname\ and its three variants on the 137 DIMACS benchmark instances, where the x-axis denotes CPU time, and the y-axis denotes the cumulative number of solved instances. Overall, \BPname\ achieves the best performance, solving 96 instances within the 1-hour time limit, compared with 94 for \texttt{NoHeurPric}, 94 for \texttt{CompleteZDD}, and 85 for \texttt{RootZDDPric}. Its curve stays above the others for most of the time horizon, showing that the full combination of algorithmic components improves both robustness and time-to-solution.
The relatively poor performance of \texttt{RootZDDPric} indicates that replacing the root-node pricing procedure with the ZDD-based dynamic programming approach substantially weakens the algorithm, as constructing and processing complete ZDDs can be difficult for large or sparse instances. The smaller but consistent gaps between \BPname\ and \texttt{NoHeurPric} or \texttt{CompleteZDD} further demonstrate the value of the heuristic pricing mechanism and reduced-cost fixing strategies. Although these two variants remain competitive on easier instances, they eventually fall behind \BPname, confirming that both components contribute to faster convergence and stronger overall solving capability.

\subsubsection{Effectiveness of ZDD Reduction Strategies.}\label{subsubsec:effect_zdd_reduction}

To evaluate the effectiveness of the proposed ZDD reduction mechanism, we compare several reduction strategies derived from different fixing rules and dual formulations. The comparison is conducted on a selected subset of DIMACS instances for which the effect of eliminating maximal stable sets can be explicitly assessed. We retain instances satisfying the following criteria: (i) the LP optimum at the root node is non-integral; and (ii) the gap between the initial upper and lower bounds does not exceed~2. This selection yields 18 instances in total.

The baseline, denoted by ``Complete'', corresponds to the complete ZDD containing all maximal stable sets. The remaining strategies construct reduced ZDDs by using dual vectors generated either from Lemma~\ref{lemma:sc_fix} or from dual-formulation-based variants consistent with the framework described in Section~\ref{sec:genaltdual}. Specifically, ``Lemma~\ref{lemma:sc_fix}'' reports the number of maximal stable sets retained in the reduced ZDD when only Lemma~\ref{lemma:sc_fix} is used. The strategies ``\emph{DF\textsubscript{Y}}'' and ``\emph{DF\textsubscript{L}}'' generate dual vectors using two alternative formulations adapted from \citet{yang2024} and \citet{De2023}, respectively. Both variants follow the same dual-vector generation framework described in Section~\ref{sec:genaltdual}. Finally, ``\emph{DF}'' denotes the proposed dual-formulation-based approach of Section~\ref{sec:genaltdual}, which combines the reduced-cost-based redundancy test with Lemma~\ref{lemma:infeasible_fix}.

Table~\ref{tableForReductionVariants} reports the remaining number of maximal stable sets (columns ``\#mss'') and, for dual-formulation-based strategies, the time required to generate the corresponding dual vectors (columns ``time''). For the columns \#mss, the smallest value in each row is highlighted in bold. 

The results show that the proposed \emph{DF} strategy is consistently the strongest, producing the fewest remaining maximal stable sets or tying for the best result. This indicates that the proposed dual formulation is at least as effective as the competing formulations on all tested instances, and strictly improves the reduction on several cases.

\begin{table}[t]
  \centering
  \caption{Comparison of different ZDD reduction strategies over 18 selected DIMACS instances}
  \label{tableForReductionVariants}
  \footnotesize
  \renewcommand{\arraystretch}{1.35}
  \setlength{\tabcolsep}{4pt}

  \begin{tabular}{>{\ttfamily}lrrrrrrrr}
  \toprule
  \multicolumn{1}{l}{\multirow{2}{*}{\normalfont Instance}}
  & \multicolumn{1}{c}{Complete}
  & \multicolumn{1}{c}{Lemma~\ref{lemma:sc_fix}}
  & \multicolumn{2}{c}{$DF_Y$}
  & \multicolumn{2}{c}{$DF_L$}
  & \multicolumn{2}{c}{DF} \\
  \cmidrule(lr){2-2} \cmidrule(lr){3-3} \cmidrule(lr){4-5}
  \cmidrule(lr){6-7} \cmidrule(lr){8-9}
  & \multicolumn{1}{c}{\#mss}
  & \multicolumn{1}{c}{\#mss}
  & \multicolumn{1}{c}{\#mss} & \multicolumn{1}{c}{time}
  & \multicolumn{1}{c}{\#mss} & \multicolumn{1}{c}{time}
  & \multicolumn{1}{c}{\#mss} & \multicolumn{1}{c}{time} \\
  \midrule

    2-FullIns\_4      & 218       & 218       & 112       & 0.27   & 112       & 0.25   & \textbf{85}     & 0.27 \\
    2-Insertions\_3   & 3161      & 3161      & 2679      & 0.55   & 2679      & 0.53   & \textbf{2632}   & 0.69 \\
    3-FullIns\_4      & 953       & 953       & 575       & 0.40   & 575       & 0.39   & \textbf{540}    & 0.37 \\
    3-Insertions\_3   & 228439    & 228439    & 219406    & 0.81   & 219406    & 0.81   & \textbf{218732} & 1.60 \\
    4-FullIns\_4      & 138       & 138       & \textbf{79} & 0.42  & \textbf{79} & 0.43  & \textbf{79}     & 0.41 \\
    4-Insertions\_3   & 37833929  & 37833929  & 37302681  & 1.34   & 37302681  & 1.35   & \textbf{37274394} & 4.40 \\
    5-FullIns\_4      & 182       & 182       & 106       & 0.51   & 106       & 0.48   & \textbf{92}     & 0.48 \\
    DSJC125.5         & 43268     & 22609     & 22318     & 25.26  & 22325     & 26.45  & \textbf{22294}  & 25.60 \\
    DSJC125.9         & 524       & 392       & 360       & 4.74   & 361       & 4.70   & \textbf{356}    & 4.53 \\
    DSJC250.9         & 2580      & 2567      & 2556      & 35.16  & \textbf{2555} & 35.44 & \textbf{2555}   & 35.82 \\
    DSJR500.1c        & 385       & 383       & \textbf{377} & 4.20  & 379       & 4.18   & \textbf{377}    & 4.09 \\
    queen10\_10       & 376692    & 1540      & 579       & 37.68  & 707       & 39.01  & \textbf{550}    & 36.13 \\
    queen11\_11       & 2640422   & 1820566   & 21857     & 32.39  & 4043      & 31.08  & \textbf{3739}   & 33.21 \\
    queen12\_12       & 19469324  & 13602600  & 32295     & 44.35  & 22088     & 43.66  & \textbf{9440}   & 50.91 \\
    queen13\_13       & 151978440 & 106620568 & \textbf{52008} & 73.39 & \textbf{52008} & 67.20 & \textbf{52008}  & 75.94 \\
    queen14\_14       & -         & -         & 385384    & 117.25 & 582384    & 123.61 & \textbf{238088} & 122.41 \\
    queen15\_15       & -         & -         & 3177110   & 194.35 & \textbf{1484400} & 190.77 & \textbf{1484400} & 197.97 \\
    queen9\_9         & 57600     & 1496      & 751       & 13.00  & 1045      & 13.57  & \textbf{93}     & 12.04 \\
    \bottomrule
  \end{tabular}
\end{table}

The benefit of dual-based reduction is particularly clear when compared with the complete ZDD. For example, on \texttt{queen11\_11}, the complete ZDD represents 2,640,422 maximal stable sets, whereas the proposed \emph{DF} strategy reduces this number to 3,739. Similarly, on \texttt{queen12\_12}, the number of represented maximal stable sets decreases from 19,469,324 to 9,440. On the large queen instances, \texttt{queen14\_14} and \texttt{queen15\_15}, the complete ZDD cannot be constructed within the one-hour time limit, whereas the reduced variants remain constructible, with \emph{DF} retaining only 238,088 and 1,484,400 maximal stable sets, respectively.

Applying only Lemma~\ref{lemma:sc_fix} is not sufficient in general. On several instances, such as \texttt{2-FullIns\_4}, \texttt{2-Insertions\_3}, \texttt{3-FullIns\_4}, and \texttt{4-Insertions\_3}, it leaves exactly the same number of maximal stable sets as the complete ZDD. Although it can be effective on some structured instances, especially in the \texttt{queen} family, its reduction power is much weaker than that of the dual-formulation-based strategies. This confirms the need for a stronger fixing mechanism based on additional dual information.

Compared with the formulations inspired by \citet{yang2024} and \citet{De2023}, the proposed \emph{DF} formulation achieves the best or tied-best result on every tested instance. Although the two alternative formulations already remove a substantial number of maximal stable sets, \emph{DF} consistently matches or further strengthens their reduction effect over the entire selected test set.

The computational overhead of the proposed reduction remains moderate. For small- and medium-sized instances, generating dual vectors typically takes less than a few seconds. For larger instances, especially in the \texttt{queen} family, this time increases but remains acceptable relative to the substantial reduction in the number of maximal stable sets represented in the ZDD. Overall, Table~\ref{tableForReductionVariants} confirms that the proposed \emph{DF} strategy provides the most robust reduction among the tested variants, significantly decreasing the size of the represented set of maximal stable sets while keeping the additional computational cost manageable.

\end{document}